\documentclass[12pt]{amsart}
\usepackage{amsmath,amsthm,amsfonts,amssymb,amscd,euscript}
\usepackage[T1]{fontenc}
\usepackage{color}
\usepackage{graphics}
\usepackage{tikz}
\input{xy}
\xyoption{all}

\newlength{\defbaselineskip}
\setlength{\defbaselineskip}{\baselineskip}


\setlength{\parindent}{.3 in} \setlength{\textwidth}{6.5 in}
\setlength{\topmargin} {-.2 in} \setlength{\evensidemargin}{0 in}
\setlength{\oddsidemargin}{0 in} \setlength{\footskip}{.3 in}
\setlength{\headheight}{.3 in} \setlength{\textheight}{8.7 in}
\setlength{\parskip}{.1 in}

\theoremstyle{plain}
\newtheorem{theorem}{Theorem}[section]

\newtheorem{proposition}[theorem]{Proposition}
\newtheorem{corollary}[theorem]{Corollary}
\newtheorem{lemma}[theorem]{Lemma}

\theoremstyle{definition}
\newtheorem{example}[theorem]{Example}
\newtheorem{definition}[theorem]{Definition}

\newcommand{\supp}{\operatorname{supp}}

\newcommand{\R}{\mathbb{R}}

\newcommand{\cR}{\mathcal{R}}
\newcommand{\ff}{F}
\newcommand{\GG}{G}

\newtheorem{theoremalpha}{Theorem}

\newtheorem{conjecturealpha}[theoremalpha]{Conjecture}

\theoremstyle{plain}

\theoremstyle{definition}

\numberwithin{equation}{section}

\newtheorem*{rep@theorem}{\rep@title} \newcommand{\newreptheorem}[2]{%
\newenvironment{rep#1}[1]{%
\def\rep@title{\bf #2 \ref{##1}}%
\begin{rep@theorem} }%
{\end{rep@theorem} } }
\makeatother
\newreptheorem{theorem}{Theorem}
\newreptheorem{conjecture}{Conjecture}

\begin{document}

\title{On the two-dimensional Jacobian conjecture: Magnus' formula revisited, I}

\author{William E. Hurst}
\address{Department of Mathematics, University of Alabama, Tuscaloosa, AL 35487, U.S.A.}
\email{wehurst@crimson.ua.edu}

\author{Kyungyong Lee}
\address{Department of Mathematics, University of Alabama,
	Tuscaloosa, AL 35487, U.S.A. 
	and Korea Institute for Advanced Study, Seoul 02455, Republic of Korea}
\email{kyungyong.lee@ua.edu; klee1@kias.re.kr}

\author{Li Li}
\address{Department of Mathematics and Statistics,
Oakland University, 
Rochester, MI 48309, U.S.A.}
\email{li2345@oakland.edu}

\author{George D. Nasr}
\address{Department of Mathematics, University of Oregon, Eugene, OR 97403, U.S.A.}
\email{gdnasr@uoregon.edu}

	\dedicatory{To the memory of Shreeram Shankar Abhyankar}
	
\begin{abstract}
Let $K$ be an algebraically closed field of characteristic 0. When the Jacobian $({\partial f}/{\partial x})({\partial g}/{\partial y}) - ({\partial g}/{\partial x})({\partial f}/{\partial y})$ is a constant for $f,g\in K[x,y]$, Magnus' formula from \cite{Magnus1} describes the relations between the homogeneous degree pieces $f_i$'s and $g_i$'s. 
We show a more general version of Magnus' formula and  prove a special case of  the two-dimensional Jacobian conjecture as its application. 
\end{abstract}

\thanks{This paper grew out of an undergraduate research project for WH, which had been initiated by 
the Randall Research Scholars Program  at the University of Alabama. KL was supported by the University of Alabama, Korea Institute for Advanced Study, and the NSF grant DMS 2042786. GDN is supported by the NSF FRG grant, Grant Number DMS-2053243.} 

\maketitle

\section{introduction}\label{section_intro}
The Jacobian conjecture, raised by Keller \cite{Keller}, has been studied by many mathematicians: a partial list of related results includes \cite{AM,A,AdEs,ApOn,BCW,BK,dBY,NVC,CW1,CW2,CZ,Dru,EssenTutaj,EssenWZ,Gwo,Hub,JZ,Kire,LM,M,MU,MO,Nagata,NaBa,Wang,Yag,Yu}.  A survey is given in \cite{Essen,vdEssen}. In this paper we exclusively deal with the plane case. Hence whenever we write the Jacobian conjecture, we mean the two-dimensional Jacobian conjecture.

Let $K$ be an algebraically closed field of characteristic 0, and let $\mathcal{R}=K[x,y]$.

\noindent \textbf{Jacobian conjecture.} 
\emph{Let $f(x,y),g(x,y)\in \mathcal{R}$. Consider the  polynomial map }$\pi: \mathcal{R}\longrightarrow \mathcal{R}$\emph{ given by}
$\pi(x)=f(x,y)$ and $\pi(y)=g(x,y)$. \emph{If the Jacobian of the map }
$$\det \begin{pmatrix}{\partial f}/{\partial x} & {\partial g}/{\partial x}\\ {\partial f}/{\partial y} &{\partial g}/{\partial y}\end{pmatrix}$$
\emph{is a non-zero constant, then the map is bijective.}

For simplicity, let $[f, g] :=\det \begin{pmatrix}{\partial f}/{\partial x} & {\partial g}/{\partial x}\\ {\partial f}/{\partial y} &{\partial g}/{\partial y}\end{pmatrix}\in \mathcal{R}$ for any pair of polynomials $f,g\in \mathcal{R}$. Similarly $[f,g]$ is defined for $f,g\in \mathcal{R}[[t]]=K[[t]][x,y]$. 

A useful tool to study this conjecture is the Newton polygon. One source for this is \cite{CW2}, but we redefine it here. Let \[f=\displaystyle\sum_{i,j\geq 0} f_{ij}x^iy^j\] be a polynomial in $\mathcal{R}$. The \textit{support} of $f$ is defined as 
\[\supp(f)=\{(i,j) \mid f_{ij}\neq 0\}\subset \mathbb{Z}^2\subset \R^2.\]
The \textit{Newton polygon} for $f$, which we denote $N(f)$, is defined to be the convex hull of $\supp(f)$ \footnote{In \cite{CW2}, it was defined as the convex hull of $\supp(f)\cup \{(0,0)\}$ in $\R^2$, which is different from  our definition.} in $\R^2$. Note that $N(f)\subset \mathbb{R}_{\ge0}^2$. See Figure \ref{fig:newton_ex} for an example of a Newton polygon. The support and Newton polygon of a Laurent polynomial in $K[x^{\pm 1},y^{\pm 1}]$ are similarly defined.

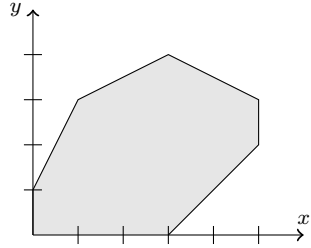
\begin{figure}[h]
\begin{center}
\begin{tikzpicture}[scale=0.60]
\usetikzlibrary{patterns}
\filldraw [black!10]  (0,0)--(0,1)--(1,3)--(3,4)--(5,3)--(5,2)--(3,0);
\draw [black]  (0,0)--(0,1)--(1,3)--(3,4)--(5,3)--(5,2)--(3,0);
\draw[->] (0,0) -- (6,0)
node[above] {\tiny $x$};
\draw[->] (0,0) -- (0,5)
node[left] {\tiny $y$};
\draw (-0.2,1)--(0.2,1);
\draw (-0.2,2)--(0.2,2);
\draw (-0.2,3)--(0.2,3);
\draw (-0.2,4)--(0.2,4);
\draw(1,-0.2)--(1,0.2);
\draw(2,-0.2)--(2,0.2);
\draw(3,-0.2)--(3,0.2);
\draw(4,-0.2)--(4,0.2);
\draw(5,-0.2)--(5,0.2);
\end{tikzpicture}
\end{center}
\caption{$N(f)$ for $f=y+7xy^3+\sqrt{7} x^3y^4-4x^5y^3+2x^5y^2-{1\over 2}x^3+xy+1$.}
\label{fig:newton_ex}
\end{figure}

 Throughout this paper whenever we consider pairs of polynomials $f,g$ with $[f,g]\in K$, we only consider such pairs  for which both  $N(f)$ and $N(g)$ contain $(1,0),(0,1)$, and $(0,0)$. As long as $\deg(f)$ and $\deg(g)$ are positive, it is always possible to obtain such a pair by adding a generic constant to $f$ and $g$ and applying some linear change of variables $x$ and $y$, which does not change $[f, g]$. 
 
  It is known that the following conjecture implies the Jacobian conjecture. (For instance, see \cite[Theorem 10.2.23]{vdEssen}.) 

\begin{conjecturealpha}\label{Jac_conj}
Let $a,b\in \mathbb{Z}_{>0}$ be relatively prime. 
Suppose that $\ff,\GG\in \mathcal{R}$ satisfy the following:

\noindent\emph{(1)} $[\ff,\GG]  \in  K$;

\noindent\emph{(2)} $\{(1,0),(0,1),(0,0)\}\subset N(\ff)\cap N(G)$ and $N(\ff)$ is similar to $N(\GG)$ with
the origin as center of similarity and with ratio $\deg(\ff) : \deg(\GG) = a : b$; and

\noindent\emph{(3)} $\min(a,b)\ge 2$.

\noindent Then  $[\ff, \GG]=0$. 
\end{conjecturealpha}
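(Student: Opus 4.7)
The plan is to use the generalized Magnus formula, the main technical tool developed in this paper, to propagate algebraic relations between the homogeneous pieces of $F$ and $G$ from the top degree downward, and then exploit the rigidity of the Newton polygon similarity to force $F$ and $G$ to lie in a common univariate subring of $\mathcal{R}$. Write $\deg F = an$ and $\deg G = bn$ for a positive integer $n$, and decompose $F = \sum_{i \leq an} F_i$, $G = \sum_{j \leq bn} G_j$ into homogeneous parts. Since the hypothesis $[F,G] \in K$ says this bracket has only a degree-zero piece, extracting the degree-$((a+b)n-2)$ part of $[F,G]$ yields the classical Magnus identity $[F_{an}, G_{bn}] = 0$; this is a genuine identity because $(a+b)n - 2 \geq 2$ under $\min(a,b) \geq 2$ and $n \geq 1$. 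Algebraic dependence of two homogeneous bivariate polynomials whose total-degrees are in the coprime ratio $a : b$ then gives $F_{an} = \alpha\, \phi^a$ and $G_{bn} = \beta\, \phi^b$ for constants $\alpha, \beta \in K^{\times}$ and a homogeneous polynomial $\phi$ of degree $n$, after reducing to the primitive case.

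The second step is to extend these relations to all edges of the Newton polygons by using the similarity hypothesis. For every primitive outer-normal direction $(p, q) \in \mathbb{Z}^2$ supporting an edge of $N(F)$, the corresponding edge of $N(G)$ has the same outer normal, with lengths scaled by $b/a$. For each such $(p, q)$ I would filter $\mathcal{R}$ by the weighting $w(x^i y^j) = p i + q j$ and extract the top-weight pieces of $F$ and $G$; the weighted analogue of Magnus' identity then yields factorizations of the form $F^{(p,q)}_{\mathrm{top}} = \alpha_{p,q}\, \psi_{p,q}^a$ and $G^{(p,q)}_{\mathrm{top}} = \beta_{p,q}\, \psi_{p,q}^b$ for weighted-homogeneous polynomials $\psi_{p,q}$. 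The generalized Magnus formula should then recursively constrain the next-to-top-weight pieces in terms of $\psi_{p,q}$ and lower-order data, edge by edge.

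The main obstacle, and the heart of the argument, is to assemble these edgewise descriptions into a single global algebraic dependence between $F$ and $G$. The strategy I would pursue is a descending induction on a lexicographic ordering of $N(F) \cap \mathbb{Z}^2$: at each step, assume that the required relations have been established for all support points of strictly higher lex-order, and then use the next Magnus identity together with the similarity constraint to pin down the coefficient at the current lattice point. The condition $\min(a, b) \geq 2$ is expected to be essential here, since it excludes the degenerate cases where vertices of $N(F)$ or $N(G)$ would be forced onto the coordinate axes at $(1,0)$ or $(0,1)$---points which the homothety of ratio $b/a$ would send to non-lattice positions. Carrying the induction through should force $F = P(\chi)$ and $G = Q(\chi)$ for a common polynomial $\chi \in \mathcal{R}$ and univariate polynomials $P, Q \in K[t]$, whence $[F, G] = 0$ by the chain rule. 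The principal technical delicacy lies in tracking how the similarity of Newton polygons constrains the support of the yet-undetermined pieces as the induction progresses; it is at precisely this step where additional structural input, presumably the refined form of the generalized Magnus formula proved elsewhere in the paper, is needed to close the argument.
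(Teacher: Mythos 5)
Your first two steps are sound but are only the standard entry point: extracting the top $w$-degree component of $[F,G]$ for each direction $w$ and invoking the homogeneous dependence lemma to get $F_+=\alpha\,\phi^a$, $G_+=\beta\,\phi^b$ is exactly the known fact the paper records (Lemma~\ref{Magnus lemma} and the $\mu=0$ case of Theorem~\ref{Magnus_thm}), and the edgewise recursive constraints are what Theorem~\ref{Magnus_thm} supplies. The genuine gap is your third step. The ``descending induction on a lexicographic ordering of $N(F)\cap\mathbb{Z}^2$'' is not an argument: you never state an inductive hypothesis that could be checked, you give no mechanism by which the Magnus identities attached to different directions interact to pin down coefficients at interior lattice points, and you concede that ``additional structural input\dots is needed to close the argument.'' Moreover the asserted outcome of the induction --- $F=P(\chi)$, $G=Q(\chi)$ for a common $\chi\in\mathcal{R}$ --- is essentially equivalent to the conclusion $[F,G]=0$ you are trying to prove, so the proposal is circular at its decisive step. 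A smaller but real inaccuracy: your explanation of why $\min(a,b)\ge 2$ is needed (lattice points being sent to non-lattice positions) is not the point; the hypothesis is needed because for $\min(a,b)=1$ the conclusion is simply false --- e.g.\ $F=x+y+1$, $G=y+(x+y+1)^b$ satisfies (1) and (2) with $[F,G]=1$.

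For context, the statement you set out to prove is stated in the paper as Conjecture~\ref{Jac_conj}; it implies the two-dimensional Jacobian conjecture, and the paper does not prove it. What the paper proves (Theorem~\ref{main_thm}) is only the case $a=2$ under the extra ``generic boundaries'' hypothesis, and even that requires machinery absent from your sketch: Lemma~\ref{lem:c=p^2 most generalized} produces a polynomial $P$ with $\supp(F-P^2)$ disjoint from the translated half-polygon $\mathcal{N}''$; the refined part of Magnus' formula (the ``Furthermore'' clause) is used in Proposition~\ref{application_of_Magnus} to show $P_m^{-1}R_{d-h}\in K[x^{\pm1},y^{\pm1}]$ for $R=F-P^2$, so that a length comparison (Corollary~\ref{application2_of_Magnus}) forces $R_{d-h}=0$; and the induction is organized not by a lexicographic order on lattice points but by an explicit combinatorial covering of $N(F)$ by parallelograms and broken lines $T_D$, $T'_D$, which is what makes the length estimates available at every stage. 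So your proposal, as written, stops exactly at the step that remains open, and the paper's actual result is a much more restricted statement established by a different, and considerably more detailed, mechanism.
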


Let $W=\{(u,v)\in \mathbb{Z}^2 \, : \, u>0\text{ or }v>0,\text{ and }\gcd(|u|,|v|)=1\}$. An element  $w = (u, v) \in W$ is called a \emph{direction}. To each such a direction we consider its $w$-grading on $\mathcal{R}$ by defining the $w$-degree of the monomial $x^iy^j$ as $n=ui + vj$. Define $\mathcal{R}_n\subset \mathcal{R}$ to be the $K$-subspace generated by monomials of $w$-degree $n$.
Then $\mathcal{R} = \oplus_{n\in \mathbb{Z}} \mathcal{R}_n$. 
A non-zero element $P$ of $\mathcal{R}_n$ is called a $w$-homogeneous element of $\mathcal{R}$; 
the integer $n$ is called the $w$-degree of $P$ and is denoted $w$-$\deg(P)$. The element of the highest $w$-degree in the homogeneous decomposition of
a non-zero polynomial $P$ is called its $w$-leading form and is denoted by $P_+$. The
$w$-degree of $P$ is by definition $w$-$\deg(P_+)$.

In \cite[Theorem 1]{Magnus1}, Magnus produced a formula which inspired much of the work for this paper. 
His formula was published almost 70 years ago but has not been used in almost any paper but \cite{Magnus2}. Even in \cite{Magnus2}, only small piece of information from the formula was utilized.  The first main result in our paper is a more general version of Magnus' formula, as given in Theorem~\ref{Magnus_thm}.
In what follows, the binomial coefficient ${A\choose B}$  is defined by \[{A\choose B}:={A(A-1)\cdots(A-B+1)\over B!}\] for any real number $A$ and any nonnegative integer $B$.

\begin{theorem}\label{Magnus_thm}
Suppose $[F, G] \in  K$. For any direction $w=(u,v) \in W$, let $d=w\text{-}\deg(F_+)$ and $e=w\text{-}\deg(G_+)$. Write the $w$-homogeneous degree decompositions $F=\sum_{i\leq d} F_i$ and $G=\sum_{i\leq e} G_i$. Then 
there exists a unique \footnote{Note that there is some ambiguity of the notation $F_d^{1/r}$, since it is unique up to an $r$-th root of unity. We fix a choice of $F_d^{1/r}$. Then the fractional power $F_d^{c/r}:=(F_d^{1/r})^c$ is nonambiguous for any integer $c$.} sequence of constants $c_0,c_1,...,c_{d+e-u-v-1}\in K$ such that $c_0\neq 0$ and 
\begin{equation}\label{Magnus_formula}
G_{e-\mu} = \sum_{\gamma=0}^{\mu} c_\gamma \sum {(e-\gamma)/d \choose \sum_{\alpha\le d-1} \nu_{\gamma,\alpha} } \frac{\left(\sum_{\alpha\le d-1} \nu_{\gamma,\alpha}\right)!}{\prod_{\alpha\le d-1} \nu_{\gamma,\alpha}!} F_d^{(e-\gamma)/d - \sum_{\alpha\le d-1} \nu_{\gamma,\alpha}} \prod_{\alpha\le d-1} F_\alpha^{\nu_{\gamma,\alpha}}
\end{equation}
for every integer $\mu\in\{0,1,...,d+e-u-v-1\}$, where the inner sum is to run over all combinations of non-negative integers $\nu_{\gamma,\alpha}$ satisfying 
$\sum_{\alpha\le d-1} (d-\alpha)\nu_{\gamma,\alpha}=\mu-\gamma$.  Furthermore, $c_\gamma=0$ if $r(e-\gamma)/d\notin\mathbb{Z}$, where $r\in\mathbb{Z}_{>0}$ is the largest integer such that $F_d^{1/r}\in K[x,y]$. 
\end{theorem}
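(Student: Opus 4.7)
The plan is to interpret the right-hand side of~\eqref{Magnus_formula} as the $w$-homogeneous decomposition of the formal ``fractional power'' expression
\[
\tilde G := \sum_{\gamma \ge 0} c_\gamma F^{(e-\gamma)/d}, \qquad F^s := F_d^s \sum_{k\ge 0}\binom{s}{k}\Bigl(\sum_{\alpha\le d-1} F_\alpha/F_d\Bigr)^k,
\]
understood in the completed fractional power ring obtained from $K[x,y]$ by adjoining $F_d^{\pm 1/r}$ and then completing with respect to decreasing $w$-degree. Expanding the $k$-th power by the multinomial theorem and collecting terms of $w$-degree $e-\mu$ --- which forces $\sum_{\alpha}(d-\alpha)\nu_{\gamma,\alpha}=\mu-\gamma$ --- reproduces exactly the inner sum $\Psi_\gamma^\mu$ appearing in~\eqref{Magnus_formula}. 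Thus the theorem becomes the claim that a unique sequence $c_0,\ldots,c_{d+e-u-v-1}$, with $c_0\ne 0$, matches $G$ to $\tilde G$ on every $w$-homogeneous piece of $w$-degree $\ge u+v-d+1$.

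Existence and uniqueness will be proved simultaneously by induction on $\mu$. For the base case $\mu=0$, the $w$-degree $d+e-u-v$ component of $[F,G]\in K$ is $[F_d,G_e]$, which vanishes since $d+e-u-v>0$. By the classical centralizer theorem for the Jacobian bracket --- for any non-constant $H\in K[x,y]$, $\{P\in K[x,y] : [H,P]=0\}=K[H_0]$, where $H_0$ is the primitive polynomial with $H\in K[H_0]$ --- applied to $F_d$, one has $H_0=F_d^{1/r}$ by the maximality of $r$. Intersecting with $w$-homogeneous polynomials of $w$-degree $e$, this centralizer becomes $K\cdot F_d^{e/d}$ (with $re/d\in\ZZ_{\ge 0}$ forced by $G_e\ne 0$), so $G_e=c_0 F_d^{e/d}$ for a unique nonzero $c_0$, matching $\Psi_0^0$.

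For the inductive step, fix $\mu\in\{1,\ldots,d+e-u-v-1\}$, suppose $c_0,\ldots,c_{\mu-1}$ have been chosen so that~\eqref{Magnus_formula} holds at $w$-degrees $e,\ldots,e-(\mu-1)$, and set $\tilde G:=\sum_{\gamma=0}^{\mu-1}c_\gamma F^{(e-\gamma)/d}$. The chain rule yields the formal identity $[F,F^s]=0$, so $[F,\tilde G]=0$; extracting the $w$-degree $d+e-u-v-\mu$ component gives
\[
[F_d,\tilde G_{e-\mu}] = -\sum_{i=1}^{\min(\mu,d)}[F_{d-i},\tilde G_{e-\mu+i}].
\]
The induction hypothesis yields $\tilde G_{e-j}=G_{e-j}$ for $1\le j\le\mu-1$; on the other hand, since $[F,G]\in K$ and $\mu<d+e-u-v$, the $w$-degree $d+e-u-v-\mu$ part of $[F,G]$ also vanishes, producing the same identity with $G_{e-\mu}$ in place of $\tilde G_{e-\mu}$. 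Subtracting, $[F_d,\,G_{e-\mu}-\tilde G_{e-\mu}]=0$, and by the centralizer fact there is a unique $c_\mu\in K$ with $G_{e-\mu}-\tilde G_{e-\mu}=c_\mu F_d^{(e-\mu)/d}=c_\mu\Psi_\mu^\mu$, forced to vanish whenever $r(e-\mu)/d\notin\ZZ$. This step advances the induction and produces uniqueness simultaneously.

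The main technical ingredient is the centralizer lemma for the Jacobian bracket on $K[x,y]$; it is what pins the ``integration constant'' $c_\mu$ to a scalar multiple of $F_d^{(e-\mu)/d}$ at each stage. Essential but routine side verifications are the chain-rule identity $[F,F^s]=0$ in the fractional power ring (immediate from $\partial_x F^s = sF^{s-1}\partial_x F$, and likewise for $\partial_y$) and the binomial--multinomial rearrangement producing $\Psi_\gamma^\mu$. I anticipate that the only real bookkeeping subtlety will be the careful handling of fractional powers --- particularly the constraint $r(e-\gamma)/d\in\ZZ$ and the fixed choice of root $F_d^{1/r}$ --- but this is not a genuine obstacle once the centralizer lemma is in hand.
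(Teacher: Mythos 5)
Your overall route is essentially the paper's: you reinterpret the right side of \eqref{Magnus_formula} as the $w$-graded pieces of $\sum_\gamma c_\gamma F^{(e-\gamma)/d}$ (the paper does the same bookkeeping with a formal variable $t$), you induct on $\mu$, and at each step you reduce to $[F_d,\,G_{e-\mu}-\widetilde G_{e-\mu}]=0$ and then pin down $c_\mu$. The genuine gap is the tool you use to finish the inductive step. The discrepancy $H:=G_{e-\mu}-\widetilde G_{e-\mu}$ is in general \emph{not} a polynomial: the terms $\Psi_\gamma^\mu$ contain negative (fractional) powers of $F_d$, so $H$ is only a $w$-homogeneous element of $K[x,y][F_d^{-1/r}]$, i.e.\ a rational function. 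The classical centralizer theorem you invoke, $\{P\in K[x,y]:[H,P]=0\}=K[H_0]$, is a statement about polynomials and does not apply to $H$; as written, the step that produces the unique $c_\mu$ and the conclusion that $c_\mu=0$ whenever $r(e-\mu)/d\notin\mathbb{Z}$ (which is exactly the ``Furthermore'' clause of the theorem) is unsupported. What is needed, and what the paper proves as Lemma~\ref{Magnus lemma}, is an elementary homogeneous statement valid for rational functions: if $f$ is $w$-homogeneous of degree $d_f>0$, $g\neq 0$ is a $w$-homogeneous rational function of degree $d_g$, and $[f,g]=0$, then $g=c\,(f^{1/r})^{s}$ with $s=rd_g/d_f\in\mathbb{Z}$ and $c\in K\setminus\{0\}$ unique; this is proved directly from Euler's identity (which forces $g^{d_f}/f^{d_g}$ to be constant) together with unique factorization. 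Substituting this lemma for the centralizer theorem repairs your argument and makes it essentially identical to the paper's proof.

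A secondary, fixable issue: even in the base case, your citation of the centralizer theorem leaves real steps unjustified. You need that the generator $H_0$ can be chosen $w$-homogeneous and coincides (up to scalar) with $F_d^{1/r}$, and that the $w$-homogeneous degree-$e$ part of $K[H_0]$ is exactly $K\cdot H_0^{re/d}$ (empty unless $re/d\in\mathbb{Z}$); neither is argued. Both follow at once from the homogeneous lemma above, so the heavy classical theorem is never actually needed, and using it obscures precisely the integrality constraint on the exponents that the theorem is asserting.
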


 In a series of forthcoming papers, we will make progress toward Conjecture~\ref{Jac_conj}, using Theorem~\ref{Magnus_thm}. The purpose of this paper is to write out a proof of Theorem~\ref{Magnus_thm}\footnote{In \cite{Magnus1}, a detailed proof was not given. Moreover, the original statement in \cite{Magnus1} was written only for $w=(1,1)$, and did not contain the statement that starts with "Furthermore", which will play a pivotal role in a series of our papers including this one.} and   illustrate how useful this theorem is.

Let $F$ and $G$ satisfy the assumptions (1) and (2) in Conjecture~\ref{Jac_conj}. Then $[\ff, \GG]\in K$ implies $\ff_+^{1/a}\in K[x,y]$ for any direction $w \in W$ (for instance, see \cite{A,ApOn,NaBa}). 
Let $\mathcal{T}$ be the set of  polynomials $f\in K[x,y]$ such that $N(f)$ contains exactly two distinct lattice points, i.e., $N(f)$ is a line segment containing no lattice points other than its endpoints.

\begin{definition}
We say that the pair $(\ff,\GG)$ has \emph{generic boundaries} if it satisfies (1) and (2) in Conjecture~\ref{Jac_conj}, and  the polynomial $\ff_+^{1/a}$ is not divisible by the  square of any polynomial in $\mathcal{T}$ for any  direction $w \in W$. \footnote{Note that the latter condition is equivalent to that $\ff_+^{1/a}$ does not have as a divisor any square of a non-monomial polynomial, since $K$ is algebraically closed.}
\end{definition}

The following is the second main result in this paper.

\begin{theorem}\label{main_thm}
If $(\ff,\GG)$ has generic boundaries with $a=2$, then Conjecture~\ref{Jac_conj} is true. More precisely, we have 
$$\ff={P}^2 + u_0$$
for some ${P}\in K[x,y]$ and some $u_0\in K$. In particular, $[\ff,\GG]=0$.
\end{theorem}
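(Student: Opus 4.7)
The plan is to apply Theorem~\ref{Magnus_thm} in the direction $w = (1,1)$ and construct $P$ by a top-down completion of squares, extracting the necessary divisibilities from the polynomiality of the $w$-homogeneous pieces of $G$. Writing $d = \deg F$ and $e = \deg G$, the hypotheses $a = 2$ and $\gcd(a,b) = 1$ with $b$ odd force $d = 2k$ and $e = bk$ for some $k \in \mathbb{Z}_{>0}$. Generic boundaries give $F_+ = H^2$ with $H := F_+^{1/2} \in K[x,y]$ of degree $k$ and squarefree up to a monomial factor; hence $r = 2$ in Theorem~\ref{Magnus_thm}, and the restriction $c_\gamma = 0$ unless $r(e - \gamma)/d \in \mathbb{Z}$ collapses to $k \mid \gamma$, so only $c_0, c_k, c_{2k}, \ldots$ can be nonzero.

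The key observation is that each Magnus summand for $G_{e-\mu}$ carries a factor $F_d^{(b-j)/2 - \sum_\alpha \nu_{jk,\alpha}} = H^{b - j - 2\sum_\alpha \nu_{jk,\alpha}}$ (writing $\gamma = jk$). Whenever $b - j - 2\sum_\alpha \nu_{jk,\alpha} < 0$, the polynomiality of $G_{e-\mu}$ forces the accompanying combinations of the $F_\alpha$'s to be divisible by a corresponding power of $H$ (using that $H$ is squarefree up to monomials). Working through increasing values of $\mu$, this extracts in order the divisibilities $H \mid F_{d-1}$, then $H \mid F_{d-2} - (F_{d-1}/(2H))^2$, and more generally
\[
H \,\Big|\, F_{d-j} - \sum_{i_1 + i_2 = d-j,\; 0 < i_1, i_2} P_{k-i_1} P_{k-i_2},
\]
exactly the conditions needed to define $P_{k-j} := (F_{d-j} - \sum P_{k-i_1} P_{k-i_2})/(2H) \in K[x,y]$ inductively for $j = 1, \ldots, k$, starting from $P_k := H$.

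Having constructed $P := H + P_{k-1} + \cdots + P_0 \in K[x,y]$, the identities $F_n = (P^2)_n$ hold for every $n \geq k$ by construction. The remaining identities $F_n = (P^2)_n$ for $1 \leq n < k$ do not involve any undetermined $P_\ell$; I plan to verify them using the ``extra'' Magnus equations at $\mu > e$ (those for which $G_{e-\mu} = 0$), whose vanishing translates, after the same divisibility analysis, into precisely the desired low-degree identities. Taking $u_0 := F_0 - P_0^2 \in K$ then yields $F = P^2 + u_0$; the chain rule gives $[F, G] = 2P \cdot [P, G] \in K$, and since any non-constant $P$ dividing a nonzero constant is impossible, we conclude $[P, G] = 0$ and hence $[F, G] = 0$.

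The principal obstacle is the simultaneous bookkeeping of the divisibility extraction and the low-degree verification: both depend on delicate cancellation structure in Magnus' formula, and one must check that the very divisibilities that make each $P_{k-j}$ a polynomial also force the low-degree constraint equations to collapse into the squares-completion identities $F_n = (P^2)_n$ for $n < k$. Accounting for the monomial part of $H$ (unconstrained by generic boundaries, since monomials do not lie in $\mathcal{T}$) introduces some technicality, but is handled by a factor-by-factor divisibility analysis in the UFD $K[x,y]$.
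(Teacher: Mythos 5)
There is a genuine gap, and it sits at the very first move: a single direction $w=(1,1)$ cannot carry the argument. The hypotheses of Conjecture~\ref{Jac_conj} allow, for example, $N(\ff)$ to be the rectangle $[0,2m']\times[0,2m]$ (this is exactly the case treated in Section~\ref{rec} of the paper); there the $(1,1)$-leading form $F_+$ is the single monomial $\lambda_{2m',2m}x^{2m'}y^{2m}$, so $H$ is a monomial, $r$ need not be $2$, and ``divisibility by $H$'' carries no information whatsoever. Your top-down completion of squares then never gets started, and no choice of $\mu$ in the $(1,1)$-Magnus relations will produce the identities you need. More generally, even when $H$ is not a monomial, what the polynomiality of the Magnus summands actually yields (this is precisely Proposition~\ref{application_of_Magnus}) is only that $P_m^{-1}R_{d-h}$ lies in $K[x^{\pm1},y^{\pm1}]$, i.e.\ divisibility by the \emph{non-monomial part} of $H$ in the Laurent ring. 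That is not enough to define $P_{k-j}:=(F_{d-j}-\sum P_{k-i_1}P_{k-i_2})/(2H)$ as a polynomial; and if some $P_{k-j}$ is only Laurent, the next inductive step collapses, because isolating the single negative-exponent term $Q^{-1}T^{k+1}$ requires knowing that \emph{all other} summands $[Q^{2k+1-r-2i}T^i]_{t^{\cdot}}$ are honest polynomials, which presupposes the earlier quotients were polynomials. This is not a bookkeeping technicality ``handled in the UFD''; it is the point where the argument needs new input.

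The second unproven step is the set of low-degree identities $F_n=(P^2)_n$ for $n<k$. Your proposal to read them off from Magnus equations ``at $\mu>e$'' is only a hope: at those levels the same mechanism again gives at best the Laurent divisibility $P_m^{-1}R_{d-h}\in K[x^{\pm1},y^{\pm1}]$, which does not force $R_{d-h}=0$ unless one already knows $\textnormal{len}(R_{d-h})<\textnormal{len}(P_m)$. Producing that length bound is exactly what the paper's proof is organized around: $P$ is defined once and for all by Lemma~\ref{lem:c=p^2 most generalized} (so that $\supp(\ff-P^2)$ misses the region $\mathcal{N}''$, with no Jacobian hypothesis used), and then $\supp(R)$ is killed piece by piece by sweeping $N(\ff)$ with the broken lines $T_D$, $T'_D$, applying Proposition~\ref{application_of_Magnus} and Corollary~\ref{application2_of_Magnus} in a \emph{different} direction $w$ for each linear piece (the normals to the edges $\overline{A_iA_{i-1}}$ and $\overline{B_iB_{i-1}}$). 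The ``generic boundaries'' hypothesis is a condition on every direction $w\in W$, and the proof really uses it in all of these directions; restricting to $w=(1,1)$ both loses that input and leaves the Laurent-versus-polynomial and low-degree issues unresolved. To repair your approach you would need, at a minimum, to bring in the other edge directions and a Newton-polygon support control playing the role of Lemma~\ref{lem:c=p^2 most generalized} and the broken-line induction.
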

 
For a real number $r\in \mathbb{R}$ and a subset $S\subseteq\mathbb{R}^2$, denote $rS:=\{rs\ : s\in S\}\subseteq\mathbb{R}^2$.  
 
\begin{corollary}
Suppose that each edge of $\frac{1}{a}N(\ff)$ contains either the origin or no lattice points other than its endpoints. If $a=2$ then Conjecture~\ref{Jac_conj} is true. 
\end{corollary}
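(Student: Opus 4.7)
The plan is to derive the corollary from Theorem~\ref{main_thm} by showing that the hypothesis forces the pair $(\ff, \GG)$ to have generic boundaries. The key geometric observation is that the two edges of $N(\ff)$ meeting the origin must lie along the coordinate axes: since $(0, 0), (1, 0), (0, 1) \in N(\ff) \subset \mathbb{R}_{\geq 0}^2$ and $(0, 0)$ is a vertex of $N(\ff)$, the local convex cone at the origin sits inside the first quadrant and contains both $(1, 0)$ and $(0, 1)$, which forces its two extreme rays to coincide with the positive $x$- and $y$-axes. The outward normals of those two axis edges are then $(0, -1)$ and $(-1, 0)$, and neither belongs to $W$. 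Consequently, for every $w \in W$ the face of $N(\ff)$ maximizing the $w$-degree is either a vertex of $N(\ff)$ or a non-axis edge.

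If $\ff_+$ is supported on a vertex, then $\ff_+$ is a monomial and so is $\ff_+^{1/a} \in K[x, y]$; such a monomial cannot be divisible by the square of any polynomial in $\mathcal{T}$, since the divisors of a monomial are monomials, whereas $\mathcal{T}$-polynomials are binomials. If $\ff_+$ is supported on a non-axis edge $E$, the corollary's hypothesis says $\tfrac{1}{2}E$ contains no lattice points other than its two endpoints. Since $\ff_+^{1/2} \in K[x, y]$ has Newton polygon $\tfrac{1}{2}E$, it must be supported at exactly these two lattice endpoints, so it factors as $\ff_+^{1/2} = x^i y^j (c_1 X_1 + c_2 X_2)$, where $c_1, c_2 \in K^*$ and $X_1, X_2$ are two monomials whose exponent vectors differ by a primitive lattice vector $(p, q)$ with $\gcd(|p|, |q|) = 1$. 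The binomial factor $c_1 X_1 + c_2 X_2$ is irreducible in $K[x, y]$ by the standard criterion for binomials with coprime exponent vector.

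It remains to check that $\ff_+^{1/2}$ is not divisible by the square of any $R \in \mathcal{T}$. The irreducible binomial $c_1 X_1 + c_2 X_2$ appears with multiplicity exactly one in the unique factorization of $\ff_+^{1/2}$, so any divisor $R$ of $\ff_+^{1/2}$ is, up to a constant, a monomial times $(c_1 X_1 + c_2 X_2)^{\epsilon}$ with $\epsilon \in \{0, 1\}$. If $\epsilon = 0$ then $R$ is a monomial and $R \notin \mathcal{T}$. If $\epsilon = 1$ then $R^2$ contains $(c_1 X_1 + c_2 X_2)^2$ as a factor, which does not divide $\ff_+^{1/2}$. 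Hence $(\ff, \GG)$ has generic boundaries, and Theorem~\ref{main_thm} yields $\ff = P^2 + u_0$ and $[\ff, \GG] = 0$.

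The main obstacle is pinning down the geometric setup of the first paragraph: beyond showing that the two origin-edges lie on the axes, we should confirm that every non-axis edge of $N(\ff)$ really is selected by some $w \in W$. This amounts to ruling out outward normals in the third quadrant, which follows from $N(\ff)$ sitting in the first quadrant with the origin as a vertex, but it requires a small case check (e.g.\ for horizontal or vertical edges off the axes) to handle all edge orientations uniformly.
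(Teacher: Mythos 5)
Your proposal is correct and takes the route the paper clearly intends: the corollary is stated without proof as a direct consequence of Theorem~\ref{main_thm}, and you supply exactly the missing check, namely that the edge hypothesis forces $(\ff,\GG)$ to have generic boundaries (edges through the origin lie on the axes and are never selected by any $w\in W$, while any other selected face gives $\ff_+^{1/2}$ equal to a monomial or a monomial times an irreducible primitive binomial with multiplicity one, hence not divisible by the square of any element of $\mathcal{T}$). The concern in your final paragraph is unnecessary: you only need that every $w\in W$ selects a vertex or an edge not containing the origin, not that every such edge is selected by some $w$.
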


\section{Magnus' Formula Revisited}
The goal of this section is to prove Theorem \ref{Magnus_thm} and present Proposition~\ref{application_of_Magnus}, a useful application of this Theorem.
We start by reinterpreting  the equality \eqref{Magnus_formula}  in Theorem~\ref{Magnus_thm} as follows. Recall that $\cR=K[x,y]$, where $K$ is an algebraically closed field of characteristic 0. 
For any $\widetilde{F}\in \mathcal{R}[[t]]$, denote  
$$[\widetilde{F}]_{t^i}=\textrm{ the coefficient of $t^i$ in $\widetilde{F}$},$$
which is a polynomial in $x$ and $y$.

Recall the generalized multinomial theorem in the formal power series ring $K[[x_1,\dots,x_n]]$: for $A\in\mathbb{Q}$,
$$(1+x_1+\cdots+x_{n})^A=\sum_{v_1,\dots,v_n\in\mathbb{Z}_{\ge0}}\frac{A(A-1)\cdots(A-\sum_{i=1}^n v_i) }{\prod_{i=1}^{n} v_i!}x_1^{v_1}\cdots x_n^{v_n}.$$
Consider a variation of this. For $x_1,\dots,x_n\in \mathcal{R}$, we have the following expansion in the ring $\mathcal{R}[[t]]$:
$$(1+x_1t+\cdots+x_{n}t^n)^A=\sum_{v_1,\dots,v_n\in\mathbb{Z}_{\ge0}}\frac{A(A-1)\cdots(A-\sum_{i=1}^n v_i) }{\prod_{i=1}^{n} v_i!}x_1^{v_1}\cdots x_n^{v_n}t^{v_1+2v_2+\cdots+nv_n}.$$
In general, for $x_0,\dots,x_n\in \mathcal{R}$ and for $A=a/b$ where $a\in\mathbb{Z}$, $b\in\mathbb{Z}_{>0}$,   
we have the following identity in the ring $\mathcal{R}[x_0^{\pm 1/b}][[t]]$ (where we fix a choice of $x_0^{1/b}$): 
\begin{equation}\label{eq:xA}
(x_0+x_1t+\cdots+x_{n}t^n)^A=\sum_{v_1,\dots,v_n\in\mathbb{Z}_{\ge0}}\frac{A\cdots(A-\sum_{i=1}^n v_i) }
{\prod_{i=1}^{n} v_i!} x_0^{A-\sum_{i=1}^n v_i}x_1^{v_1}\cdots x_n^{v_n}t^{v_1+\cdots+nv_n}.
\end{equation}

\begin{lemma}\label{reinterpret Magnus formula}
The equality \eqref{Magnus_formula} can be rewritten as the following equality in $\mathcal{R}[F_d^{\pm 1/d}]$:
\begin{equation}\label{eq:gemu_variation}
G_{e-\mu} 
= \sum_{\gamma=0}^{\mu} c_\gamma 
\bigg[\bigg(F_d+F_{d-1}t+F_{d-2}t^2+\cdots\bigg)^{(e-\gamma)/d}\bigg]_{t^{\mu-\gamma}}.
\end{equation}
\end{lemma}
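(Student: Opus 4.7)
The plan is to show that (\ref{eq:gemu_variation}) follows from (\ref{Magnus_formula}) by a direct application of the generalized multinomial expansion (\ref{eq:xA}), together with a reindexing of summation variables. Since both sides of (\ref{eq:gemu_variation}) are just a repackaging of the right-hand side of (\ref{Magnus_formula}), the argument is purely computational; nothing from the structure of $[F,G]\in K$ is used here.

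First I would apply (\ref{eq:xA}) in $\mathcal{R}[F_d^{\pm 1/d}][[t]]$ with $A=(e-\gamma)/d$ and with the substitution $x_0=F_d$, $x_i=F_{d-i}$ for $i\ge 1$ (truncating, since $F$ is a polynomial so only finitely many $x_i$ are nonzero). This produces
\[
\bigl(F_d+F_{d-1}t+F_{d-2}t^{2}+\cdots\bigr)^{(e-\gamma)/d}
=\sum_{v_1,v_2,\dots\ge 0}\frac{A(A-1)\cdots(A-\sum_i v_i+1)}{\prod_i v_i!}\,F_d^{A-\sum_i v_i}\prod_i F_{d-i}^{v_i}\,t^{\sum_i i\,v_i}.
\]
Extracting the coefficient of $t^{\mu-\gamma}$ just imposes $\sum_i i\,v_i=\mu-\gamma$.

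Next I would change variables $\alpha:=d-i$, so $v_i=\nu_{\gamma,\alpha}$. Under this relabeling, $\sum_i v_i=\sum_{\alpha\le d-1}\nu_{\gamma,\alpha}$, $\prod_i F_{d-i}^{v_i}=\prod_{\alpha\le d-1} F_\alpha^{\nu_{\gamma,\alpha}}$, and the exponent condition becomes the constraint $\sum_{\alpha\le d-1}(d-\alpha)\nu_{\gamma,\alpha}=\mu-\gamma$ that appears in (\ref{Magnus_formula}). Finally, using the definition $\binom{A}{B}=A(A-1)\cdots(A-B+1)/B!$ from the paper, the prefactor becomes
\[
\frac{A(A-1)\cdots(A-\sum v_i+1)}{\prod v_i!}
=\binom{(e-\gamma)/d}{\sum_{\alpha\le d-1}\nu_{\gamma,\alpha}}\cdot\frac{(\sum_{\alpha\le d-1}\nu_{\gamma,\alpha})!}{\prod_{\alpha\le d-1}\nu_{\gamma,\alpha}!}.
\]
Multiplying by $c_\gamma$ and summing over $\gamma$ from $0$ to $\mu$ produces exactly the right-hand side of (\ref{Magnus_formula}), so the two expressions for $G_{e-\mu}$ coincide.

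The only potential subtlety — hardly an obstacle — is checking that the rewriting really does take place in the ring $\mathcal{R}[F_d^{\pm 1/d}]$: the fractional exponent $A-\sum_i v_i=(e-\gamma)/d-\sum_{\alpha}\nu_{\gamma,\alpha}$ on $F_d$ makes sense because $F_d^{1/d}$ has been fixed, and on the level of the final identity (\ref{eq:gemu_variation}) all terms land in $\mathcal{R}[F_d^{\pm 1/d}]$ as required. Everything else is bookkeeping.
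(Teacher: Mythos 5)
Your proposal is correct and follows essentially the same route as the paper: both apply the fractional-power multinomial expansion \eqref{eq:xA} with $x_0=F_d$, $x_i=F_{d-i}$, identify the prefactor with $\binom{(e-\gamma)/d}{\sum\nu_{\gamma,\alpha}}\frac{(\sum\nu_{\gamma,\alpha})!}{\prod\nu_{\gamma,\alpha}!}$, and observe that the constraint $\sum_{\alpha\le d-1}(d-\alpha)\nu_{\gamma,\alpha}=\mu-\gamma$ is exactly the restriction to the coefficient of $t^{\mu-\gamma}$. The only difference is cosmetic (you expand the right side of \eqref{eq:gemu_variation} while the paper assembles the right side of \eqref{Magnus_formula}), and your form of the numerator $A(A-1)\cdots(A-\sum v_i+1)$ is the correct one, consistent with the paper's definition of $\binom{A}{B}$.
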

\begin{proof}
Let $A=(e-\gamma)/d$ and $s=\sum_{\alpha\le d-1} \nu_{\gamma,\alpha}$. Then
$${(e-\gamma)/d \choose \sum_{\alpha\le d-1} \nu_{\gamma,\alpha} } \frac{\left(\sum_{\alpha\le d-1} \nu_{\gamma,\alpha}\right)!}{\prod_{\alpha\le d-1} \nu_{\gamma,\alpha}!}
=
\frac{A(A-1)\cdots(A-s+1) }{ s! } \frac{s!}{\prod_{\alpha\le d-1} \nu_{\gamma,\alpha}!}
=
\frac{A\cdots(A-s+1) }{\prod_{\alpha\le d-1} \nu_{\gamma,\alpha}!}.
$$
So the right side of \eqref{Magnus_formula}, without the constraint $
\sum_{\alpha\le d-1} (d-\alpha)\nu_{\gamma,\alpha}=\mu-\gamma
$, is 
$$
\sum_{\gamma=0}^{\mu} c_\gamma 
\bigg(F_d+F_{d-1}t+F_{d-2}t^2+\cdots\bigg)^{(e-\gamma)/d},
$$
thanks to \eqref{eq:xA}. 
Then, note that the constraint $
\sum_{\alpha\le d-1} (d-\alpha)\nu_{\gamma,\alpha}=\mu-\gamma
$ is equivalent to the restriction to degree $t^{\mu-\gamma}$. 
Thus the right side of  \eqref{Magnus_formula} is equal to the right side of  \eqref{eq:gemu_variation}.
\end{proof}

We will prove the following statement, which is equivalent to Theorem \ref{Magnus_thm}.
\begin{theorem}\label{prop:Magnus_formula_equivalence}
Suppose $[F, G] \in  K$. For any direction $w=(u,v) \in W$, let $d=w\text{-}\deg(F_+)$ and $e=w\text{-}\deg(G_+)$.  Assume $d>0$. Write the $w$-homogeneous  decompositions $F=\sum_{i\leq d} F_i$ and $G=\sum_{i\leq e} G_i$. Define 
$$\widetilde{F}=\ff_d+\ff_{d-1}t+\cdots \quad\text{and}\quad \widetilde{G}=G_e+G_{e-1}t+\cdots.$$
Let $r\in\mathbb{Z}_{>0}$ be the largest integer such that $F_d^{1/r}\in K[x,y]$. 
Then there exists a unique sequence of constants $c_0,c_1,\dots,c_{d+e-u-v-1}\in K$ such that $c_0\neq 0$ and 
\begin{equation}\label{Magnus_formula_equivalence}
G_{e-\mu} = \sum_{\gamma=0}^{\mu} c_\gamma [\widetilde{F}^{\frac{e-\gamma}{d}}]_{t^{\mu-\gamma}}
\end{equation}
for every integer $\mu\in\{0,1,...,d+e-u-v-1\}$. 
Moreover, $c_\gamma=0$ if $r(e-\gamma)/d\notin\mathbb{Z}$. 
\end{theorem}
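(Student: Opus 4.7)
My plan is to recast \eqref{Magnus_formula_equivalence} as a formal-power-series identity in $t$ and then extract its coefficients inductively, using the hypothesis $[F,G]\in K$ to produce the vanishing that forces each new $c_\mu$ to be a scalar. The whole argument takes place in $\mathcal{R}^{*}[[t]]$ where $\mathcal{R}^{*}:=\mathcal{R}[F_d^{-1},F_d^{1/d}]$, so that every fractional power $\widetilde{F}^{(e-\gamma)/d}$ is well-defined by the binomial expansion.

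The starting observation is that $\widetilde{F}$ and $\widetilde{G}$ are just the scaling substitutions
$$\widetilde{F}(x,y,t)=t^{d}F(xt^{-u},yt^{-v}),\qquad \widetilde{G}(x,y,t)=t^{e}G(xt^{-u},yt^{-v}),$$
which is immediate from $F_i(\lambda^u x,\lambda^v y)=\lambda^i F_i(x,y)$. A direct chain-rule computation then gives the key identity
$$[\widetilde{F},\widetilde{G}]_{x,y}\;=\;t^{d+e-u-v}\,[F,G](xt^{-u},yt^{-v})\;=\;c\,t^{d+e-u-v},\qquad c:=[F,G]\in K.$$
Extending the $w$-grading to $t$ by $w\text{-}\deg(t):=1$ makes $\widetilde{F}$ and $\widetilde{G}$ $w$-homogeneous of degrees $d$ and $e$, hence each $\widetilde{F}^{A}$ is $w$-homogeneous of degree $Ad$ and $[\widetilde{F},\widetilde{F}^{A}]=0$ by the chain rule.

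I then induct on $\mu=0,1,\dots,d+e-u-v-1$. Assuming $c_0,\dots,c_{\mu-1}$ have been chosen so that \eqref{Magnus_formula_equivalence} holds up to level $\mu-1$, set
$$\widetilde{H}^{(\mu)}\;:=\;\widetilde{G}-\sum_{\gamma=0}^{\mu-1}c_\gamma\,\widetilde{F}^{(e-\gamma)/d}\,t^{\gamma}.$$
Combining the key identity with $[\widetilde{F},\widetilde{F}^{A}]=0$ yields $[\widetilde{F},\widetilde{H}^{(\mu)}]_{x,y}=c\,t^{d+e-u-v}$. Expanding $\widetilde{H}^{(\mu)}=\sum_l H_l\,t^l$, the inductive hypothesis forces $H_l=0$ for $l<\mu$ and identifies $H_\mu$ with the discrepancy $G_{e-\mu}-\sum_{\gamma<\mu}c_\gamma[\widetilde{F}^{(e-\gamma)/d}]_{t^{\mu-\gamma}}$ that I must control. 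Reading off the $t^\mu$-coefficient of $[\widetilde{F},\widetilde{H}^{(\mu)}]$ leaves only the surviving term $[F_d,H_\mu]$, which vanishes because $\mu<d+e-u-v$.

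Both $F_d$ and $H_\mu$ are $w$-homogeneous (of degrees $d>0$ and $e-\mu$), so the classical two-variable Jacobian lemma --- two Poisson-commuting $w$-homogeneous polynomials in $K[x,y]$ satisfy $H_\mu^{d}=\lambda\,F_d^{e-\mu}$ for some $\lambda\in K$, proved by the weighted-homogeneous reduction of their irreducible algebraic-dependence relation to a binomial --- lets me extract $c_\mu$. Writing $F_d=P^{r}$ with $P=F_d^{1/r}\in\mathcal{R}$ the maximal polynomial $r$-th root, unique factorization in $K[x,y]$ shows that a nonzero $H_\mu\in\mathcal{R}$ satisfying this relation exists exactly when $r(e-\mu)/d\in\mathbb{Z}$; in that case $H_\mu=c_\mu F_d^{(e-\mu)/d}$ for a unique $c_\mu\in K$, while otherwise $\lambda=0$, $H_\mu=0$, and I set $c_\mu:=0$. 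At $\mu=0$ one has $H_0=G_e=G_+\ne 0$, so $c_0\ne 0$, and uniqueness of each $c_\mu$ is immediate from the construction. The main obstacle is this last step: invoking the two-variable Jacobian lemma in the $w$-homogeneous setting and carrying out the unique-factorization analysis that isolates the integrality condition $r(e-\gamma)/d\in\mathbb{Z}$; everything else is formal power-series bookkeeping driven by the scaling identity for $[\widetilde{F},\widetilde{G}]_{x,y}$.
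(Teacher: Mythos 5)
Your proposal is correct in substance and follows essentially the same route as the paper: induction on $\mu$, using the vanishing of the positive $w$-degree parts of $[F,G]$ to show that the discrepancy $H_\mu=G_{e-\mu}-\sum_{\gamma<\mu}c_\gamma[\widetilde F^{(e-\gamma)/d}]_{t^{\mu-\gamma}}$ satisfies $[F_d,H_\mu]=0$, and then invoking the lemma on Jacobian-commuting $w$-homogeneous elements to get $H_\mu=c_\mu F_d^{(e-\mu)/d}$ together with the integrality condition $r(e-\mu)/d\in\mathbb{Z}$. Your packaging of the commutator computation through the single scaling identity $[\widetilde F,\widetilde G]=[F,G]\,t^{d+e-u-v}$, combined with $[\widetilde F,\widetilde F^{A}]=0$, is a clean shortcut for the term-by-term regrouping the paper carries out explicitly, and it is correct: since $H_l=0$ for $l<\mu$ by induction, the $t^\mu$-coefficient of $[\widetilde F,\widetilde H^{(\mu)}]$ is exactly $[F_d,H_\mu]$, and it vanishes because $\mu<d+e-u-v$.

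The one step you need to repair is the final application of the lemma: you apply it to ``two Poisson-commuting $w$-homogeneous polynomials'' and assert ``a nonzero $H_\mu\in\mathcal{R}$'', but $H_\mu$ is in general not a polynomial. The already-determined terms $c_\gamma[\widetilde F^{(e-\gamma)/d}]_{t^{\mu-\gamma}}$ involve powers $F_d^{(e-\gamma)/d-s}$ whose exponents can be negative (already $[\widetilde F^{e/d}]_{t^1}=\tfrac{e}{d}F_d^{e/d-1}F_{d-1}$ fails to be a polynomial when $e<d$, and for larger $\mu$ the exponent drops further), so $H_\mu$ is only a $w$-homogeneous element of $\mathcal{R}[F_d^{-1/r}]$, i.e.\ of the fraction field; moreover when $u+v<0$ the degree $e-\mu$ can itself be negative, so no polynomial-only statement can suffice. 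The fix is exactly the paper's Lemma~\ref{Magnus lemma}: state and prove the commuting-homogeneous lemma for a nonzero $w$-homogeneous element $g$ of the fraction field, running the unique-factorization argument with integer (possibly negative) exponents $s_i$ satisfying $d\,s_i=(e-\mu)r_i$. This still yields $H_\mu=c_\mu F_d^{(e-\mu)/d}$ with $c_\mu$ unique, and forces $H_\mu=0$, $c_\mu=0$ whenever $r(e-\mu)/d\notin\mathbb{Z}$. With the lemma at that level of generality your induction closes and the argument coincides with the paper's proof.
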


Note that the last condition implies that every nonzero summand appearing on the right side of \eqref{Magnus_formula_equivalence} is in $\mathcal{R}[F_d^{-1/r}]$, so must be a rational function.

In order to prove Theorem~\ref{prop:Magnus_formula_equivalence}, we need the following the lemma (cf. \cite[Propositions 1,2]{NaBa}, \cite[Lemma 22]{ApOn}, \cite[p258]{Magnus1}) and for the readers' convenience we reproduce the proof here.
\begin{lemma}\label{Magnus lemma}
Let $w=(u,v)\in W$. 
Let $R$ be any polynomial ring over $K$, $f\in R$ be a $w$-homogeneous polynomial of degree $d_f>0$, and $g$ be a nonzero $w$-homogeneous function of degree $d_g\in\mathbb{Z}$ in the fractional field of $R$ such that the Jacobian $[g,f]=0$. 
Define $r\in\mathbb{Z}_{>0}$ to be the largest integer such that $h=f^{1/r}$ is a polynomial. 
Then there exists a unique $c\in K\setminus\{0\}$ so that $g=c\cdot h^s$, where $s=rd_g/d_f$ is an integer. 
\end{lemma}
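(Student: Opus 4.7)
The plan is to study the auxiliary rational function $Q:=g^{d_f}/f^{d_g}$, show that it is a nonzero constant, and then promote the identity $g^{d_f}=c'f^{d_g}$ to the desired formula $g=c\,h^s$ using unique factorization in $K[x,y]$ together with the maximality of $r$.

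For the first step, the product rule for the Jacobian turns $[g,f]=0$ into $[Q,f]=0$. Since $Q$ has $w$-degree $0$, Euler's identity for $w$-homogeneous functions gives $ux\,Q_x+vy\,Q_y=0$, while $[Q,f]=0$ gives $f_y\,Q_x-f_x\,Q_y=0$. These two equations form a linear system in $(Q_x,Q_y)$ whose determinant is $-(ux\,f_x+vy\,f_y)=-d_f\,f$, a nonzero element of $K(x,y)$; hence $Q_x=Q_y=0$, and since $\mathrm{char}\,K=0$, $Q\in K^{*}$, say $Q=c'$. Thus $g^{d_f}=c'f^{d_g}$ in $K(x,y)$.

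For the second step, I would use that $K$ is algebraically closed: the $\mathbb{G}_m$-action associated to $w$ permutes the irreducible factors of $f$ in $K[x,y]$ and fixes each up to scalar, so each factor is itself $w$-homogeneous. Writing $f=\prod p_i^{n_i}$, maximality of $r$ forces $r=\gcd(n_i)$; setting $m_i:=n_i/r$ then yields $\gcd(m_i)=1$ and $h=\prod p_i^{m_i}$. The identity $g^{d_f}=c'f^{d_g}$ confines the divisor of $g$ to $\bigcup\{p_i=0\}$, so $g=c''\prod p_i^{e_i}$ with $c''\in K^{*}$ and $e_i\in\mathbb{Z}$; comparing valuations at each $p_i$ gives $d_f\,e_i=d_g\,n_i$, i.e.\ $e_i=s\,m_i$ with $s:=rd_g/d_f\in\mathbb{Q}$.

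The decisive step---and the main obstacle---is the integrality of $s$: from integrality of each $e_i=s\,m_i$ together with $\gcd(m_i)=1$, B\'ezout forces $s\in\mathbb{Z}$. Here the maximality of $r$ is essential, since it is precisely what yields $\gcd(m_i)=1$; a seemingly cleaner Laurent-ring shortcut (in a torically diagonalizing variable $\tau$) would fail, because monomial factors of $f$ become units after such a localization, and their multiplicities are lost. Once $s\in\mathbb{Z}$, we conclude $g=c''h^s$, and uniqueness of $c:=c''$ is immediate from $h\neq 0$.
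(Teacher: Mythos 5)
Your proposal is correct and takes essentially the same route as the paper: both first show that $Q=g^{d_f}/f^{d_g}$ is a nonzero constant by combining Euler's identity with the vanishing Jacobian (your $2\times 2$ linear system in $(Q_x,Q_y)$ is just a repackaging of the paper's direct computation that $(g^{d_f}/f^{d_g})_x=(g^{d_f}/f^{d_g})_y=0$), and then compare exponents in the prime factorizations of $f$ and $g$ to get $g=c\,h^s$ with $s=rd_g/d_f\in\mathbb{Z}$. Your B\'ezout step via $\gcd(m_i)=1$ is only a cosmetic variant of the paper's gcd manipulation, so there is nothing further to flag.
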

\begin{proof}
If $g\in K$, the statement is trivial where $s=0$, $c=g$.  So for the rest of the proof we assume that $g$ is not a constant.
By Euler's Lemma, $uxf_x+vyf_y=d_ff$, $uxg_x+vyg_y=d_gg$. Then
$$\begin{bmatrix} d_ff\\d_gg\end{bmatrix}=\begin{bmatrix} f_x&f_y\\g_x&g_y\end{bmatrix}\begin{bmatrix} ux\\vy\end{bmatrix},$$ 
so 
$$\begin{bmatrix} d_ffg_y-d_ggf_y\\ -d_ffg_x+d_ggf_x\end{bmatrix}
=\begin{bmatrix} g_y&-f_y\\-g_x&f_x\end{bmatrix}\begin{bmatrix} d_ff\\d_gg\end{bmatrix}
=  \begin{bmatrix} g_y&-f_y\\-g_x&f_x\end{bmatrix}\begin{bmatrix} f_x&f_y\\g_x&g_y\end{bmatrix}\begin{bmatrix} ux\\vy\end{bmatrix}
=  \begin{bmatrix} 0&0\\0&0\end{bmatrix}\begin{bmatrix} ux\\vy\end{bmatrix}
=\begin{bmatrix} 0\\0\end{bmatrix}$$
Thus 
$(g^{d_f}/f^{d_g})_x=d_fg^{d_f-1}g_xf^{-d_g}-d_gf^{-d_g-1}f_xg^{d_f}
=g^{d_f-1}f^{-d_g-1}(d_ffg_x-d_ggf_x)=0$, and similarly 
$(g^{d_f}/f^{d_g})_y=0$. So $g^{d_f}/f^{d_g}=c'$ for some $c'\in K\setminus\{0\}$.

Let $a_1,a_2\in K\setminus\{0\}$, $p_1,\dots,p_n$ be distinct irreducible polynomials, $r_1,\dots,r_n\in\mathbb{Z}_{\ge0}$, $s_1,\dots,s_n\in\mathbb{Z}$, such that we have the prime factoriaztion $f=a_1p_1^{r_1}\cdots p_n^{r_n}$,  $g=a_2p_1^{s_1}\cdots p_n^{s_n}$.  Then $d_fs_i=d_gr_i$ for $1\le i\le n$, and $r={\rm gcd}(r_1,\dots,r_n)$. 
Let $s'={\rm gcd}(s_1,\dots,s_n)>0$. We have $d_fs'={\rm gcd}(d_fs_1,\dots,d_fs_n)={\rm gcd}(d_gr_1,\dots,d_gr_n)=|d_g|r$. So $s=\pm s'$ is an integer, $r_i:s_i=d_f:d_g=r:s$. So the exponent of $p_i$ in the prime factorization of $c=g/h^s$ is $s_i-s(r_i/r)=0$; thus $c$ is a constant.  The uniqueness of $c$ follows from the previous sentences. 
\end{proof}

\begin{proof}[Proof of Theorem \ref{prop:Magnus_formula_equivalence}]
We proceed by induction on $\mu$. 
The base case of $\mu=0$ is $\GG_e=c_0 \ff_d^{e/d}$, which follows from Lemma \ref{Magnus lemma}.
Note that $c_0\neq0$, because otherwise $\GG_e=c_0\ff_d^{e/d}=0$ which contradicts the assumption that $e=w\text{-}\deg(G_+)$.

For the inductive step, assume $\mu>0$. By inductive assumption, $c_0,\dots,c_{\mu-1}$ are uniquely determined.
The assumption $[F,G]\in K$ implies that each positive $w$-degree component of $[F,G]$ is 0. 
Note that if the $w$-degrees of homogeneous rational functions $f$ and $g$ are $i$ and $j$ respectively, then the $w$-degree of $[f,g]$ is $i+j-u-v$. 
{ On the other hand, the  component in $[F,G]$ of $w\text{-degree}=(d+e-u-v-\mu)$ is just $\big[[\widetilde{F},\widetilde{G}]\big]_{t^{\mu}}$. 
Since $\mu<{d+e-u-v}$, we have
$$0=\big[[\widetilde{F},\widetilde{G}]\big]_{t^\mu}
=\big[[\sum_{i\ge0} F_{d-i}t^{i},\sum_{j\ge0} G_{e-j}t^{j}]\big]_{t^\mu}
=\sum_{\stackrel{i,j\ge0}{i+j=\mu}}[F_{d-i},G_{e-j}],
$$
therefore
\begin{equation}\label{eq:GF=FG}
[G_{e-\mu}, F_d]=-[F_d,G_{e-\mu}]=\sum_{\stackrel{i>0,j\ge0}{i+j=\mu}}[F_{d-i},G_{e-j}].
\end{equation}
}

Define
$$H=G_{e-\mu}-\sum_{\gamma=0}^{\mu-1} c_\gamma [\widetilde{F}^{\frac{e-\gamma}{d}}]_{t^{\mu-\gamma}}$$
Note that \eqref{Magnus_formula_equivalence} holds if and only if $H=c_\mu F_d^{\frac{e-\mu}{d}} \ (=c_\mu [\widetilde{F}^{\frac{e-\mu}{d}}]_{t^0})$. 

If $H=0$, then the equation  $H=c_\mu F_d^{\frac{e-\mu}{d}}$ holds exactly when $c_\mu=0$, so the choice of $c_\mu$ is unique. 

Now assume $H\neq0$. It is a homogeneous rational function in $\mathcal{R}[F_d^{-1/r}]$ of $w\text{-deg}=(e-\mu)$ by the inductive hypothesis.
We claim that $[H,F_d]=0$. Indeed,
\begingroup
\allowdisplaybreaks
\begin{align*}
[H,F_d]&=[G_{e-\mu},F_d]-[\sum_{\gamma=0}^{\mu-1} c_\gamma [\widetilde{F}^{\frac{e-\gamma}{d}}]_{t^{\mu-\gamma}},F_d] \quad\text{ (by the definition of $H$)}\\
&=\sum_{\stackrel{i>0,j\ge0}{i+j=\mu}}[F_{d-i},G_{e-j}]-\sum_{\gamma=0}^{\mu-1} [c_\gamma [\widetilde{F}^{\frac{e-\gamma}{d}}]_{t^{\mu-\gamma}},F_d] \quad\text{ (by \eqref{eq:GF=FG})}\\
&=
\sum_{j=0}^{\mu-1}[F_{d-\mu+j},
\sum_{\gamma=0}^{j} c_\gamma [\widetilde{F}^{\frac{e-\gamma}{d}}]_{t^{j-\gamma}}]
-\sum_{\gamma=0}^{\mu-1} [c_\gamma [\widetilde{F}^{\frac{e-\gamma}{d}}]_{t^{\mu-\gamma}},F_d]\quad\text{ (by the inductive hypothesis)}\\
&=
\sum_{\gamma=0}^{\mu-1}c_\gamma 
\Big(
\sum_{j=\gamma}^{\mu-1}\big[F_{d-\mu+j},[\widetilde{F}^{\frac{e-\gamma}{d}}]_{t^{j-\gamma}}\big]
-\big[[\widetilde{F}^{\frac{e-\gamma}{d}}]_{t^{\mu-\gamma}},F_d \big]
\Big)
\\
&=
\sum_{\gamma=0}^{\mu-1}c_\gamma 
\Big(
\sum_{j=\gamma}^{\mu}\big[F_{d-\mu+j},[\widetilde{F}^{\frac{e-\gamma}{d}}]_{t^{j-\gamma}}\big]
\Big)
=
\sum_{\gamma=0}^{\mu-1}c_\gamma 
\Big(
\sum_{j=\gamma}^{\mu}\big[[\widetilde{F}]_{t^{\mu-j}},[\widetilde{F}^{\frac{e-\gamma}{d}}]_{t^{j-\gamma}}\big]
\Big)
\\
&=
\sum_{\gamma=0}^{\mu-1}c_\gamma 
\Big(
\big[[\widetilde{F},\widetilde{F}^{\frac{e-\gamma}{d}}]\big]_{t^{\mu-\gamma}}
\Big)=0\quad\text{ (because $[\widetilde{F},\widetilde{F}^{\frac{e-\gamma}{d}}]=0$)}
\\
\end{align*}
\endgroup
This equality together with Lemma \ref{Magnus lemma} (with $g=H, f=F_d, h=F_d^{1/r}$) implies that there is a unique element $c_\mu\in K\setminus\{0\}$ such that 
$$H=c_\mu h^s=c_\mu F_d^{\deg H/\deg F_d}=c_\mu F_d^{\frac{e-\mu}{d}},$$
where $s=r(e-\mu)/d\in\mathbb{Z}$. In other words, $c_\mu=0$ if $r(e-\mu)/d\notin\mathbb{Z}$.   
\end{proof}

For any real numbers $r_1\le r_2$, we use the usual notation for a closed interval $[r_1,r_2]:=\{x\in \mathbb{R} \ : \ r_1\le x\le r_2\}$, and introduce the notation $[r_1,r_2]_{\mathbb{Z}}:=[r_1,r_2]\cap \mathbb{Z}$. 
For a  line segment $\overline{AB}\subset \mathbb{R}^2$ whose endpoints are both in $\mathbb{Z}^2\subset  \mathbb{R}^2$, we define the length ${\rm len}(\overline{AB})\in \mathbb{Z}_{\ge 0}$ to be one less than the number of lattice points on $\overline{AB}$. 
For any direction $w=(u,v)\in W$ and for any $w$-homogeneous Laurent polynomial $h\in K[x^{\pm1}, y^{\pm1}]$, we define ${\rm len}(h)$ to be the length of $N(h)$; that is, if $h= a_0x^by^c+a_1x^{b+v}y^{c-u}+a_2x^{b+2v}y^{c-2u}+\cdots+a_lx^{b+lv}y^{c-lu}$ with $a_0\neq0$ and $a_l\neq0$, then ${\rm len}(h)=l$.  

In the following statement, for each polynomial $F,P$ and $R$, we fix  $w=(u,v)\in W$ and write the $w$-homogeneous degree decompositions $F=\sum_i F_i$, $P=\sum_i P_i$, and $R=\sum_i R_i$.

\begin{proposition}\label{application_of_Magnus}
Let $k\in\mathbb{Z}_{>0}$ and assume that $(\ff,\GG)$ has generic boundaries with $a=2$ and $b=2k+1$. Denote $d:=w\emph{-deg}(F_+)$ and $e:=w\emph{-deg}(G_+)$. Let $P\in\mathcal{R}$  such that $w\emph{-deg}(P_+)=m=d/2$ and $\ff_d=P_m^2$.   Let $R=F-P^2$ and $h\in [1,2m-1]_{\mathbb{Z}}$. 
If $R_{d-\ell}=0$ for all $\ell<h$, then $P_m^{-1}R_{d-h}\in K[x^{\pm 1},y^{\pm 1}]$.
\end{proposition}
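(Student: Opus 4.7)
The plan is to apply Theorem~\ref{Magnus_thm} at the specific index $\mu=(k+1)h$ and extract the unique contribution with $P_m$ in the denominator; polynomiality of $G_{e-\mu}$ then forces $P_m\mid R_{d-h}^{k+1}$ in $K[x,y]$, and square-freeness of the non-monomial part $Q$ of $P_m$ (from generic boundaries) upgrades this to $Q\mid R_{d-h}$, yielding the conclusion.

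Write $P_m=x^{a'}y^{b'}Q$ where $Q$ is a square-free product of distinct non-monomial irreducibles (the generic-boundary assumption with $a=2$ forces $F_+^{1/a}=P_m$ to have no repeated non-monomial factor). If $Q=1$ the statement is trivial since $P_m^{-1}R_{d-h}=R_{d-h}/(x^{a'}y^{b'})\in K[x^{\pm 1},y^{\pm 1}]$ automatically, so assume $Q\neq 1$. Since $F_d=P_m^2$ and $Q$ is square-free, the invariant $r$ of Theorem~\ref{Magnus_thm} equals $2$, so $c_\gamma=0$ unless $m\mid\gamma$ and Magnus' identity becomes
\[
G_{e-\mu}=\sum_{j\ge 0,\ mj\le\mu}c_{mj}\bigl[\widetilde{F}^{(2k+1-j)/2}\bigr]_{t^{\mu-mj}}.
\]
The smallest $w$-degree whose supporting line carries two non-negative lattice points is at least $u+v-1$ (equal to $\max(u,v)$ if $\min(u,v)=1$, and $\ge uv\ge u+v$ otherwise), so $Q\neq 1$ forces $m\ge u+v-1$. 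Combined with $h\le 2m-1$, this gives $(k+1)h\le(k+1)(2m-1)\le d+e-u-v-1$, so $\mu=(k+1)h$ is a legitimate index.

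Setting $\widetilde{P}=\sum_{i\ge 0}P_{m-i}t^i$ and $\widetilde{R}=\sum_{i\ge 0}R_{d-i}t^i$ so that $\widetilde{F}=\widetilde{P}^2+\widetilde{R}$, I expand
\[
\widetilde{F}^{(2k+1-j)/2}=\sum_{n\ge 0}\binom{(2k+1-j)/2}{n}\widetilde{P}^{2k+1-j-2n}\widetilde{R}^n.
\]
The hypothesis $R_{d-\ell}=0$ for $\ell<h$ gives $\widetilde{R}=R_{d-h}t^h+O(t^{h+1})$, so $[\widetilde{R}^n]_{t^{nh}}=R_{d-h}^n$ and $[\widetilde{R}^n]_{t^\nu}=0$ for $\nu<nh$. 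A summand is non-polynomial (has a negative $P_m$-power) precisely when $2k+1-j-2n<0$, which happens for $n\ge k+1-\lfloor j/2\rfloor$. The decisive inequality
\[
nh-\bigl((k+1)h-mj\bigr)\ge (k+1-j/2)h-(k+1)h+mj=\frac{j(2m-h)}{2}>0\qquad (j\ge 1),
\]
valid because $h\le 2m-1<2m$, places every potential non-polynomial contribution strictly above the evaluation index, so $[\widetilde{F}^{(2k+1-j)/2}]_{t^{(k+1)h-mj}}\in K[x,y]$ for every $j\ge 1$; meanwhile terms with $j>2k+1$ vanish outright since $(k+1)h-mj<0$ in that range. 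For $j=0$, the only non-polynomial value $n=k+1$ satisfies $nh=(k+1)h$ with equality, forcing convolution index $\nu'=0$ and yielding the single extra contribution $\binom{(2k+1)/2}{k+1}[\widetilde{P}^{-1}]_{t^0}\cdot R_{d-h}^{k+1}=\binom{(2k+1)/2}{k+1}R_{d-h}^{k+1}/P_m$; higher $n\ge k+2$ would require $t$-degree $\ge (k+2)h$, unavailable at $(k+1)h$.

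Substituting, Magnus' identity at $\mu=(k+1)h$ reads
\[
G_{e-(k+1)h}=(\text{polynomial})+c_0\binom{(2k+1)/2}{k+1}\frac{R_{d-h}^{k+1}}{P_m}.
\]
Since $G_{e-(k+1)h}\in\mathcal{R}$, $c_0\neq 0$, and $\binom{(2k+1)/2}{k+1}=(2k+1)(2k-1)\cdots 1/(2^{k+1}(k+1)!)\neq 0$ in characteristic $0$, it follows that $R_{d-h}^{k+1}/P_m\in K[x,y]$, i.e., $P_m\mid R_{d-h}^{k+1}$. Consequently $Q\mid R_{d-h}^{k+1}$; writing $Q=\prod_i q_i$ with each $q_i$ prime in the UFD $K[x,y]$, primality gives $q_i\mid R_{d-h}$ for every $i$, so $Q\mid R_{d-h}$ and $P_m^{-1}R_{d-h}=(R_{d-h}/Q)/(x^{a'}y^{b'})\in K[x^{\pm 1},y^{\pm 1}]$. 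The main technical obstacle is pinpointing which $(j,n)$ pairs can contribute a $P_m^{-1}$ at $t^{(k+1)h}$; the strict bound $h<2m$ is exactly what isolates the sole culprit $(j,n)=(0,k+1)$.
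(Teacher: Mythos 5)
Your overall strategy coincides with the paper's proof of Proposition~\ref{application_of_Magnus}: apply Theorem~\ref{prop:Magnus_formula_equivalence} at $\mu=(k+1)h$, use the ``Furthermore'' clause with $r=2$ (valid once the monomial case is excluded) to reduce the sum to $\gamma=mj$, expand $\widetilde{F}^{(2k+1-j)/2}$ binomially around a square (your exact splitting $\widetilde F=\widetilde P^2+\widetilde R$ versus the paper's truncated $Q$ and remainder $T$ is an immaterial difference), isolate the unique negative-power contribution $(j,n)=(0,k+1)$ via $h<2m$, and finish with the square-freeness of $P_m$ away from its monomial factor. Those steps are correct and match the paper.

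The gap is in your verification that $\mu=(k+1)h$ is admissible, i.e.\ $(k+1)h\le d+e-u-v-1$, which amounts to $m\ge u+v-k$. Your justification --- ``the smallest $w$-degree whose supporting line carries two non-negative lattice points is at least $u+v-1$, equal to $\max(u,v)$ if $\min(u,v)=1$ and $\ge uv\ge u+v$ otherwise'' --- is only valid when $u,v\ge 1$. The set $W$ contains directions with a zero or negative component (for instance $w=(-1,2)$), and these are precisely the outer normals of the north-western edges of $N(F)$ that arise when Proposition~\ref{application_of_Magnus} is invoked in the proof of Theorem~\ref{main_thm}. For such $w$ your auxiliary claim is false: for $w=(-1,9)$ the line $-x+9y=6$ contains the non-negative lattice points $(3,1)$ and $(12,2)$, although $6<u+v-1=7$; so ``$Q\neq1$ forces $m\ge u+v-1$'' does not follow from the two-point argument alone. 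The inequality you need is still true under the hypotheses, but for a different reason which must be supplied: the similarity $N(G)=\tfrac{2k+1}{2}N(F)$ forces all vertices of $N(F)$ to lie in $(2\mathbb{Z})^2$, so $(1,0),(0,1)\in N(F)$ upgrades to $(2,0),(0,2)\in N(F)$, giving $m\ge\max(u,v)\ge u+v$ whenever $\min(u,v)\le 0$, while your $m\ge uv$ bound covers $u,v\ge1$; the paper instead argues through the dichotomy of whether $(1,1)\in N(P)$, the exceptional case forcing $w=(1,c)$ and $m=c$. With such an argument inserted, your proof is complete and is essentially the paper's.
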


\begin{proof}
We assume $P_m$ is not a monomial (thus $F_d$ is not a monomial) since the statement is trivial otherwise.
Let 
$$\aligned
&\widetilde{F}=\ff_d+\ff_{d-1}t+\ff_{d-2}t^2+\cdots \in \mathcal{R}[[t]],\\
&Q=P_m+P_{m-1}t+\cdots+P_{m-h}t^{h} \in \mathcal{R}[[t]],\\
&\text{and }T=\widetilde{F}-Q^2.
\endaligned$$ 
For each positive integer $z$, let $\mathcal{O}(t^{z})$ denote an element of the form $\sum_{i\ge z} f_i t^i$ in $\mathcal{R}[[t]]$, where each $f_i$ is a polynomial in $\mathcal{R}$ which we do not have to care about. Since $R_{d-\ell}=0$ for $\ell<h$, we get
$$T = \left(\ff_{d-h}-\sum_{j=0}^{h}P_{m-j}P_{m-h+j}\right)t^h+ \mathcal{O}(t^{h+1})=R_{d-h}t^h+ \mathcal{O}(t^{h+1}).$$

We will apply  Theorem~\ref{prop:Magnus_formula_equivalence} to the case of $\mu=h(k+1)$. For that purpose, we need to check that $h(k+1)\le d+e-u-v-1$. Observe:  $$\aligned d+e-u-v-1-h(k+1)&\ge 2m+(2k+1)m-u-v-1-(2m-1)(k+1)\\
&=m-u-v+k\ge m-u-v +1=m+1 - (w\text{-deg}(xy))\ge 0,\endaligned$$
where the last inequality  holds because $m-(w\text{-deg}(xy))\ge 0$ if the lattice point $(1,1)$ is contained in $N(P)$, or $w\text{-deg}(xy)=m+1$ otherwise. Indeed, if $(1,1)$ is not contained in $N(P)$, then $N(P)$ is either the triangle with vertices $(0,0),(c,0),(0,1)$  or the triangle with vertices  $(0,0),(1,0),(0,c)$ for some $c\in\mathbb{Z}_{>0}$. Without loss of generality we assume the former. Since $P_m$ is not a monomial, both the points $(c,0)$ and $(0,1)$ must lie in the support of $P_m$. So $w=(1,c)$, $m=c$, and thus $w\text{-deg}(xy)=1+c=m+1$.


Since $(\ff,\GG)$ has generic boundaries, Theorem~\ref{prop:Magnus_formula_equivalence} gives the following:
$$\aligned
\GG_{e-h(k+1)} 
&= \sum_{r=0}^{\lfloor h(k+1)/m\rfloor} c_{rm}
\bigg[\widetilde{F}^{k+1/2-r/2}\bigg]_{t^{h(k+1)-rm}}\\
&= \sum_{r=0}^{\lfloor h(k+1)/m\rfloor} c_{rm}
\bigg[(Q^2+T)^{k+1/2-r/2}\bigg]_{t^{h(k+1)-rm}}\\
&= \sum_{r=0}^{\lfloor h(k+1)/m\rfloor} c_{rm} 
\bigg[\sum_{i=0}^\infty\binom{k+1/2-r/2}{i}Q^{2k+1-r-2i}T^i\bigg]_{t^{h(k+1)-rm}}.
\endaligned
$$
Note that $T=\mathcal{O}(t^{h})$, which implies $T^i=\mathcal{O}(t^{hi})$. Then it is enough to look at $r$ and $i$ such that
$hi\le h(k+1)-rm$, or equivalently $i\le k+1-rm/h$. 
Then the exponent of $Q$ satisfies
$$2k+1-r-2i\ge 2k+1-r-2(k+1-rm/h)=2rm/h-r-1\ge 2rm/d-r-1=r-r-1=-1.$$
Here  the first ``$\ge$'' becomes ``$=$'' only when $i=k+1-rm/h$, and  the second ``$\ge$'' becomes ``$=$'' only when $r=0$. Since the exponent of $Q$ is an integer, it is always nonnegative except when ``$r=0$ and $i=k+1$'' (in which case the exponent is $-1$). Since $\GG_{e-h(k+1)}$ and $[Q^{2k+1-r-2i}T^i]_{t^{h(k+1)-rm}}$ are polynomials in $K[x,y]$ whenever $2k+1-r-2i\ge 0$,   the following must also be a polynomial in $K[x,y]$:
$$\aligned
&c_0\binom{k+1/2}{k+1}[Q^{-1}T^{k+1}]_{t^{h(k+1)}}\\
&=c_0\binom{k+1/2}{k+1}\bigg[(P_m+\mathcal{O}(t))^{-1} \Big(R_{d-h}t^h+\mathcal{O}(t^{h+1})\Big)^{k+1}\bigg]_{t^{h(k+1)}}\\
&=c_0\binom{k+1/2}{k+1}P_m^{-1} (R_{d-h})^{k+1}.
\endaligned
$$
Since $c_0\neq 0$ and $\binom{k+1/2}{k+1}\neq 0$, we get that  $P_m^{-1} (R_{d-h})^{k+1}$ is a polynomial. Since $(F, G)$ has generic boundaries, the polynomial $P_m$ is not divisible by the square of any polynomial in $\mathcal{T}$. This implies $P_m^{-1} R_{d-h}\in K[x^{\pm1},y^{\pm1}]$, because $K$ is algebraically closed. 
 \end{proof}

\begin{corollary}\label{application2_of_Magnus}
The same hypotheses as above. If $\text{len}(R_{d-h})<\text{len}(P_m)$ then $R_{d-h}= 0$.
\end{corollary}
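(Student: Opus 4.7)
The plan is to combine Proposition~\ref{application_of_Magnus} with the multiplicativity of length for $w$-homogeneous Laurent polynomials; together they force $R_{d-h}=0$ almost immediately.

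First, I would invoke Proposition~\ref{application_of_Magnus} to write
$$R_{d-h}=L\cdot P_m,\qquad L:=P_m^{-1}R_{d-h}\in K[x^{\pm1},y^{\pm1}].$$
Since $P_m$ is $w$-homogeneous of $w$-degree $m$ and $R_{d-h}$ is either zero or $w$-homogeneous of $w$-degree $d-h$, it follows that $L$ is itself $w$-homogeneous (of $w$-degree $d-h-m$) or zero; indeed, decomposing $L$ into its $w$-homogeneous components and multiplying by the single homogeneous piece $P_m$ yields a decomposition of $R_{d-h}$ into distinct $w$-degrees, so only one component of $L$ can survive.

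Next, I would record the auxiliary fact that for any two non-zero $w$-homogeneous Laurent polynomials $A,B\in K[x^{\pm1},y^{\pm1}]$,
$$\text{len}(AB)=\text{len}(A)+\text{len}(B).$$
Because $\gcd(|u|,|v|)=1$, each $A$ can be written uniquely as $A=x^{b}y^{c}\,\phi_A(s)$ with $s=x^{v}y^{-u}$, where $\phi_A$ is a univariate polynomial of degree $\text{len}(A)$ with non-zero constant term; additivity of length is then the usual additivity of degree for univariate polynomials over the field $K$, together with the fact that the product of the endpoint coefficients is non-zero.

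Combining the two, if $R_{d-h}\ne 0$ then $L\ne 0$, so
$$\text{len}(R_{d-h})=\text{len}(L)+\text{len}(P_m)\ge \text{len}(P_m),$$
contradicting the hypothesis $\text{len}(R_{d-h})<\text{len}(P_m)$; hence $R_{d-h}=0$. There is no real obstacle: the entire substance of the corollary resides in Proposition~\ref{application_of_Magnus}, and this is the clean length-theoretic consequence.
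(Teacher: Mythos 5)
Your proposal is correct and follows essentially the same route as the paper: the paper's proof is exactly the contrapositive observation that $R_{d-h}\neq 0$ together with $P_m^{-1}R_{d-h}\in K[x^{\pm1},y^{\pm1}]$ forces $\text{len}(R_{d-h})\ge\text{len}(P_m)$. You merely make explicit the homogeneity of the quotient and the additivity of $\text{len}$ under products, which the paper leaves implicit.
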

 \begin{proof}
If $R_{d-h}\neq 0$ then $\text{len}(R_{d-h})\ge \text{len}(P_m)$, because $P_m^{-1} R_{d-h}\in K[x^{\pm1},y^{\pm1}]$. 
  \end{proof}

The following lemma is elementary but makes Proposition~\ref{application_of_Magnus} useful.
Let $\ff \in\mathcal{R}$ be a polynomial with a nonzero constant term such that all vertices of $N(F)$ are in $(2\mathbb{Z})^2$, and let $C$ be any vertex  of $N(\ff)$ other than the point of origin $O$.  Let $\mathcal{N}'=\frac{1}{2} N(\ff)$ which is defined at the end of Section~\ref{section_intro}, and $\mathcal{N}''=\mathcal{N}'+\frac{1}{2}\overrightarrow{OC}$. (For example, see the polygons shown in Figure \ref{fig:newton arbitrary}.)

\begin{lemma}\label{lem:c=p^2 most generalized}
Let $\ff=\sum_{i,j}\lambda_{ij}x^iy^j$.   There exists a polynomial $P=\sum_{(i,j)\in \mathcal{N}'} p_{ij}x^iy^j$,  unique up to a sign, such that $\emph{supp}(\ff-P^2)\cap \mathcal{N}''=\emptyset$.
\end{lemma}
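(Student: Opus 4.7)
The plan is to determine the coefficients $p_{ij}$ of $P$ inductively by ordering the lattice points of $\mathcal{N}'$ according to a carefully chosen linear functional. Since $C$ is a vertex of the convex polygon $N(F)$, I would first pick a direction $w\in\mathbb{R}^2$ lying in the open normal cone to $N(F)$ at $C$, so that the functional $\ell(a,b):=w\cdot(a,b)$ attains its maximum over $N(F)$ uniquely at $C$. Then $\ell$ attains its unique maximum over $\mathcal{N}'$ at $q_0:=\tfrac{1}{2}C$ and over $\mathcal{N}''$ at $C=2q_0$. I would enumerate the lattice points of $\mathcal{N}'\cap\mathbb{Z}^2$ as $q_0,q_1,\ldots,q_N$ in weakly decreasing order of $\ell$-value, so that $\ell(q_0)>\ell(q_1)\ge\cdots\ge\ell(q_N)$. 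Since $\mathcal{N}''$ is the integer translate of $\mathcal{N}'$ by $q_0\in\mathbb{Z}^2$, the points $r_k:=q_k+q_0$ for $k=0,1,\ldots,N$ enumerate $\mathcal{N}''\cap\mathbb{Z}^2$ in the corresponding order.

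Next I would verify the structural claim that each equation $(P^2)_{r_k}=\lambda_{r_k}$ has the form
\[
2p_{q_0}p_{q_k}+\Lambda_k(p_{q_1},\ldots,p_{q_{k-1}})=\lambda_{r_k}\qquad(k\ge 1),
\]
with base case $p_{q_0}^2=\lambda_C$. Writing $(P^2)_{r_k}=\sum p_{q_i}p_{q_j}$ over ordered pairs $(q_i,q_j)$ of lattice points of $\mathcal{N}'$ with $q_i+q_j=r_k$, the constraint $\ell(q_i)+\ell(q_j)=\ell(q_k)+\ell(q_0)$ together with the fact that $q_0$ is the \emph{strict} maximizer of $\ell$ on $\mathcal{N}'$ forces each contributing pair to satisfy either (a) exactly one of $q_i,q_j$ equals $q_0$ and the other then equals $q_k$, or (b) both $\ell(q_i)$ and $\ell(q_j)$ lie strictly between $\ell(q_k)$ and $\ell(q_0)$, which by the ordering means $i,j<k$. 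Case (a) contributes $2p_{q_0}p_{q_k}$ for $k\ge 1$ (or $p_{q_0}^2$ when $k=0$), and case (b) assembles into the polynomial $\Lambda_k$ involving only strictly earlier coefficients.

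This reduces the problem to a triangular recursion. The base case has exactly two solutions $p_{q_0}=\pm\sqrt{\lambda_C}$ in the algebraically closed field $K$, both nonzero because $\lambda_C\neq 0$ ($C$ is a vertex of $N(F)$, hence $C\in\supp(F)$). For $k\ge 1$, the equation $2p_{q_0}p_{q_k}=\lambda_{r_k}-\Lambda_k$ uniquely determines $p_{q_k}\in K$ since $p_{q_0}\neq 0$. Thus $P$ is determined by the choice of sign of $p_{q_0}$, and a simple induction (using that $\Lambda_k$ is a sum of quadratic monomials in the earlier $p_{q_i}$, hence invariant under a global sign flip) shows that replacing $p_{q_0}$ by $-p_{q_0}$ produces the polynomial $-P$, yielding uniqueness up to a sign. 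The only subtlety — and arguably the main step — is the case analysis above; it rests solely on the convexity of $\mathcal{N}'$ and on $C$ being a vertex of $N(F)$, so there is no substantial analytic obstacle.
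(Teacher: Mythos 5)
Your proposal is correct and follows essentially the same route as the paper: order the lattice points of $\mathcal{N}'$ by a linear functional maximized uniquely at $\frac{1}{2}C$ and solve the resulting triangular quadratic system recursively, with the base coefficient determined up to sign by $\lambda_C\neq 0$. The only cosmetic difference is that the paper forces a strict total order by taking a supporting line of irrational slope through $C$, whereas you allow ties and observe that strict maximality at $\frac{1}{2}C$ alone suffices.
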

\begin{proof}
First consider the case that $N(\ff)$ is a rectangle $[0,2m']\times[0,2m]$ for $m',m\in \mathbb{Z}_{>0}$. In particular, $\lambda_{2m',2m}\neq0$. See Figure \ref{fig:newton_rectangle}. Let $C=(2m',2m)$. 
The required property gives a system of $(m+1)(m'+1)$ quadratic equations with $(m+1)(m'+1)$ variables $p_{ij}$. We can solve $p_{ij}$ recursively, in the following  ``graded lex order'': $p_{m',m}>p_{m',m-1}>p_{m'-1,m}>p_{m',m-2}>p_{m'-1,m-1}>p_{m'-2,m}>\cdots$.  Namely, first use $\lambda_{2m',2m}-p_{m',m}^2=0$ to determine $p_{m',m}\neq0$ up to a sign; next use $\lambda_{2m',2m-1}-2p_{m',m}p_{m',m-1}=0$ to uniquely determine $p_{m',m-1}$, etc. 

Even if $N(\ff)$ is arbitrary, we can still solve $p_{ij}$ recursively with respect to an appropriate order in the same way as follows. 
 Let $L_c=\{(x,y)\ | \alpha x+\beta y=c\}$ (for some $c>0$) be a line with irrational slope that passes $C$ and intersect with $N(F)$ only at $C$. Then $N(F)$ lies in the half plane $\alpha x+\beta y\le c$. Arrange points $\{z_i=(x_i,y_i)\}_{1\le i\le n}$ in $\mathcal{N}'$ such that $\alpha x_1+\beta y_1>\alpha x_2+\beta y_2>\cdots>\alpha x_n+\beta y_n$. Then $z_1=\frac{1}{2}C$. For Denote ${\bf x}^{z_i}=x^{x_i}y^{y_i}$. Then in our new notation, $P=\sum_{z_i} p_{z_i}{\bf x}^{z_i}$. We claim that we can solve $p_{z_1},p_{z_2},\dots, p_{z_n}$ recursively. First use $\lambda_{2z_1}-p_{z_1}^2=0$ to determine $p_{z_1}$ up to a sign. If $p_{z_1},\dots,p_{z_{k-1}}$ are determined, then using the equaltion 
$$\lambda_{z_1+z_k}=\sum_{z_i+z_j=z_1+z_k} p_{z_i}p_{z_j}=2p_{z_1}p_{z_k}+\sum_{z_i+z_j=z_1+z_k,1<i,j<k} p_{z_i}p_{z_j}$$
we can uniquely determine $p_{z_k}$. Since $\mathcal{N}''=\{z_1+z_k\ | \  1\le k\le n\}$, we have found a unique $P$ (up to a sign) such that  $\text{supp}(\ff-P^2)\cap \mathcal{N}''=\emptyset$. 
\end{proof}

\section{Proof of Theorem~\ref{main_thm} for the case where $N(\ff)$ is a rectangle}\label{rec}
Let $\ff=\sum_{i,j}\lambda_{ij}x^iy^j$. 
In this section we prove Theorem~\ref{main_thm}, assuming that $N(\ff)$ is a rectangle $[0,2m']\times[0,2m]$ for $m',m\in \mathbb{Z}_{>0}$. In particular, $\lambda_{2m',2m}\neq0$. See Figure \ref{fig:newton_rectangle}.
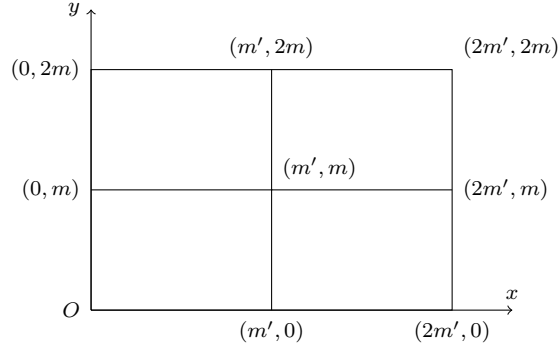
\begin{figure}[h]
\begin{center}
\begin{tikzpicture}[scale=0.80]
\usetikzlibrary{patterns}
\draw (0,0)--(0,4)--(6,4)--(6,0)--(0,0);
\draw (0,2)--(6,2) (3,0)--(3,4);
\draw (0,0) node[anchor=east] {\tiny$O$};
\draw (0,4) node[anchor=east] {\tiny$(0,2m)$};
\draw (6,4) node[anchor=south west] {\tiny$(2m',2m)$};
\draw (6,0) node[anchor=north] {\tiny$(2m',0)$};
\draw (3,4) node[anchor=south] {\tiny$(m',2m)$};
\draw (6,2) node[anchor=west] {\tiny$(2m',m)$};
\draw (0,2) node[anchor=east] {\tiny$(0,m)$};
\draw (3,2) node[anchor=south west] {\tiny$(m',m)$};
\draw (3,0) node[anchor=north] {\tiny$(m',0)$};
\draw[->] (0,0) -- (7,0)
node[above] {\tiny $x$};
\draw[->] (0,0) -- (0,5)
node[left] {\tiny $y$};
\end{tikzpicture}
\end{center}
\caption{The case where $N(\ff)$ is a rectangle }
\label{fig:newton_rectangle}
\end{figure}

Let $P$ be a polynomial given by Lemma~\ref{lem:c=p^2 most generalized}, and let $R=F-P^2$. 
Write the $w$-homogeneous degree decomposition $R=\sum_i R_i$ for $w=(0,1)\in W$, and  the $w'$-homogeneous degree decomposition $R=\sum_i R'_i$ for $w'=(1,0)\in W$. 

\begin{proposition}\label{sum_PP} Suppose that $(\ff,\GG)$ has generic boundaries with $a=2$.  Let $d'=2m'$, $d=2m$ and $e'=(2k+1)m'$, $e=(2k+1)m$ for some positive integer $k$. Then we have $R_{d-h}=0$  for $h\in [0,d-1]_{\mathbb{Z}}$, and $R'_{d'-h}=0$ for $h\in [0,d'-1]_{\mathbb{Z}}$. 
\end{proposition}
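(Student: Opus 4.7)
The plan is to prove both vanishings by a coupled induction on $h$ (respectively $h'$), invoking Proposition~\ref{application_of_Magnus} in each of the two directions $w=(0,1)$ and $w'=(1,0)$ and using Corollary~\ref{application2_of_Magnus} to close each induction step via a length comparison. The a priori support restriction $\supp(R)\cap\mathcal{N}''=\emptyset$ from Lemma~\ref{lem:c=p^2 most generalized} (with the distinguished vertex $C=(2m',2m)$, so $\mathcal{N}''=[m',2m']\times[m,2m]$) supplies the initial length savings; the two first-round conclusions then refine $\supp(R)$ enough to finish the second rounds.

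First I would settle the base cases $R_d=0$ and $R'_{d'}=0$, which are required even to invoke Proposition~\ref{application_of_Magnus}. Since $(F,G)$ has generic boundaries with $a=2$, $F_d$ is the square of a unique-up-to-sign polynomial $Q:=F_d^{1/2}$ supported on $y=m$, and analogously in the $w'$-direction. The $m'+1$ equations of Lemma~\ref{lem:c=p^2 most generalized} indexed by the top edge of $\mathcal{N}''$, namely $(i,2m)$ with $i\in[m',2m']$, involve only the $m'+1$ coefficients of $P_m$ and form a triangular recursion that determines them uniquely, starting from $p_{m',m}=\pm\sqrt{\lambda_{2m',2m}}$; since $Q$ satisfies the same subsystem, we conclude $P_m=Q$ up to sign, hence $F_d=P_m^2$ and $R_d=0$. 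The symmetric argument using the right edge of $\mathcal{N}''$ gives $R'_{d'}=0$.

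Next I would run a first round of induction in each direction using only the support condition $\supp(R)\subseteq([0,2m']\times[0,2m])\setminus\mathcal{N}''$. For $h\in[1,m]$, the component $R_{d-h}$ is supported on the line $y=2m-h\in[m,2m-1]$, which meets $\mathcal{N}''$ along $x\in[m',2m']$; hence $\supp(R_{d-h})\subseteq[0,m'-1]\times\{2m-h\}$ and $\text{len}(R_{d-h})\le m'-1<m'=\text{len}(P_m)$. Assuming inductively that $R_{d-\ell}=0$ for $\ell<h$, Proposition~\ref{application_of_Magnus} gives $P_m^{-1}R_{d-h}\in K[x^{\pm 1},y^{\pm 1}]$ and Corollary~\ref{application2_of_Magnus} then forces $R_{d-h}=0$. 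This yields $R_{d-h}=0$ for all $h\in[0,m]$, and symmetrically $R'_{d'-h'}=0$ for all $h'\in[0,m']$, which combined upgrade the support bound to $\supp(R)\subseteq[0,m'-1]\times[0,m-1]$.

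With this refined support in hand, a second round of induction finishes the argument: for $h\in[m+1,2m-1]$ the line $y=2m-h\in[1,m-1]$ no longer meets $\mathcal{N}''$, but the new support bound still forces $\supp(R_{d-h})\subseteq[0,m'-1]\times\{2m-h\}$, so $\text{len}(R_{d-h})\le m'-1<m'$ and Proposition~\ref{application_of_Magnus} together with Corollary~\ref{application2_of_Magnus} again deliver $R_{d-h}=0$; the symmetric second round handles $h'\in[m'+1,2m'-1]$. The main obstacle I anticipate is the base case: Lemma~\ref{lem:c=p^2 most generalized} by itself only kills $\supp(R)$ inside $\mathcal{N}''$, so a priori $R_d$ could be nonzero on $\{(i,2m):0\le i\le m'-1\}$, and one has to reconcile the $P$ produced by the lemma with the Jacobian-enforced polynomial square root of $F_d$ to rule this out.
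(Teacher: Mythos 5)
Your proof is correct and follows essentially the same route as the paper: the same induction on $h$ in the two directions $w=(0,1)$ and $w'=(1,0)$, using $\supp(R)\cap\mathcal{N}''=\emptyset$ from Lemma~\ref{lem:c=p^2 most generalized} to get the length bound $\mathrm{len}(R_{d-h})<m'=\mathrm{len}(P_m)$ and then Proposition~\ref{application_of_Magnus} together with Corollary~\ref{application2_of_Magnus} to force each $R_{d-h}$ (resp.\ $R'_{d'-h}$) to vanish, followed by the refinement of $\supp(R)$ to $[0,m'-1]\times[0,m-1]$. If anything, you are more explicit than the paper at the two points it treats tersely: the base case, where the paper cites "$F_d$ is a square" as well known while you actually reconcile the Lemma's $P_m$ with $F_d^{1/2}$ via the triangular top-edge subsystem (this reconciliation is genuinely needed, since Proposition~\ref{application_of_Magnus} assumes $F_d=P_m^2$ for the specific $P$ from the Lemma), and the tail range $h\in[m+1,d-1]$, where the paper asserts the vanishing follows from the refined support but one really needs, as you provide, a second round of the Proposition/Corollary induction using that refined support.
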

\begin{proof}
We will use induction on $h$.  The base case of $h=0$ is well known (for instance, see \cite{A,ApOn,NaBa}), and also follows from Theorem~\ref{Magnus_thm} for $\mu=0$.

Let $h\in [1,m]_{\mathbb{Z}}$, and assume the inductive hypothesis that $R_{d-\ell}=0$ for $\ell<h$.  Lemma~\ref{lem:c=p^2 most generalized} implies that 
$\text{len}(R_{d-h})<m'=\text{len}(P_+)$. Hence $R_{d-h}=0$ by Corollary~\ref{application2_of_Magnus}.

Applying the same argument, we also get $R'_{d'-h}=0$  for $h\in [0,m']_{\mathbb{Z}}$. Then the support of $R$ is contained in $[0,m'-1]\times[0,m-1]$, which in turn implies that $R_{d-h}=0$  for $h\in [m+1,d-1]_{\mathbb{Z}}$, and 
$R'_{d'-h}=0$  for $h\in [m'+1,d'-1]_{\mathbb{Z}}$. \end{proof}

In light of this proposition, we can show the following. 
\begin{corollary}\label{sum_PP_imply} Suppose that $(\ff,\GG)$ has generic boundaries with $a=2$. If $N(\ff)$ is a rectangle, then $F=P^2+u_0$ for some constant $u_0\in K$. In particular, $[\ff,\GG]=0$.
\end{corollary}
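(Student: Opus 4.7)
The plan is to combine the two vanishing statements from Proposition~\ref{sum_PP}---one for the grading $w=(0,1)$ and one for $w'=(1,0)$---to squeeze the polynomial $R := \ff - P^2$ down to its constant term, and then to read off $[\ff,\GG]=0$ from the identity $[\ff,\GG]=2P[P,\GG]$.

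First I would translate the graded vanishing into a vanishing on individual lattice points. For $w=(0,1)$ the $w$-degree of the monomial $x^{i}y^{j}$ equals $j$, and $d=w\text{-}\deg(\ff_+)=2m$, so the conclusion $R_{d-h}=0$ for every $h\in[0,d-1]_{\mathbb{Z}}$ is equivalent to the assertion that $R$ carries no monomial $x^{i}y^{j}$ with $j\in[1,2m]$. Symmetrically, applying Proposition~\ref{sum_PP} with $w'=(1,0)$ and $d'=2m'$ rules out every monomial of $R$ with $i\in[1,2m']$. Since $\supp(R)\subseteq N(\ff)\cup N(P^{2})\subseteq[0,2m']\times[0,2m]$, the only lattice point that can carry a nonzero coefficient of $R$ is the origin, so $R=u_{0}$ for some $u_{0}\in K$ and thus $\ff=P^{2}+u_{0}$.

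For the final assertion, the Jacobian identity $[\ff,\GG]=[P^{2}+u_{0},\GG]=2P[P,\GG]$ combined with the hypothesis $[\ff,\GG]\in K$ forces $[P,\GG]=0$: the polynomial $P$ has positive total degree (since $\ff$ does, and $\ff_{d}=P_{m}^{2}$ is nonconstant), so $2P[P,\GG]$ can lie in $K$ only when $[P,\GG]=0$. Consequently $[\ff,\GG]=0$. The main obstacle has already been dispatched by Proposition~\ref{sum_PP}; what remains here is essentially a bookkeeping argument on supports together with the short Jacobian computation, with no additional difficulty to anticipate.
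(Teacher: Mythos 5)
Your proposal is correct and follows essentially the same route as the paper: Proposition~\ref{sum_PP} in the two gradings $w=(0,1)$ and $w'=(1,0)$ pins $\supp(R)$ to the origin, giving $\ff=P^2+u_0$, and then $[\ff,\GG]=2P[P,\GG]\in K$ yields $[\ff,\GG]=0$. Your handling of the last step (noting $P$ is nonconstant, so $2P[P,\GG]\in K$ forces $[P,\GG]=0$) is if anything a cleaner phrasing of the paper's degree argument.
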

\begin{proof}
Proposition~\ref{sum_PP} implies that  the support of $R$ is either the origin or empty, so $R$ is equal to a constant, say $u_0\in K$. That is,
$\ff=P^2+u_0$. This gives $[\ff,\GG]=[P^2,\GG]=2P[P,\GG]\in K$.  So $P\in K$, which gives  $[\ff,\GG]=0$. 
\end{proof}

\begin{example}
Assume that $a=2,b=3$, $N(F)$ is the $2\times2$ square, $N(\GG)$ is the $3\times3$ square, and $\ff$ takes the form
$$ F= \lambda_{2,2}x^2y^2 + \lambda_{2,1}x^2y + \lambda_{1,2}xy^2 + \lambda_{1,1}xy + \lambda_{2,0}x^2 + \lambda_{0,2}y^2 + \lambda_{1,0}x + \lambda_{0,1}y + \lambda_{0,0},$$
where $\lambda_{i,j}\in K$ and $\lambda_{2,2}\lambda_{2,0}\lambda_{0,2}\neq 0$. See Figure \ref{fig:ex_1}. 
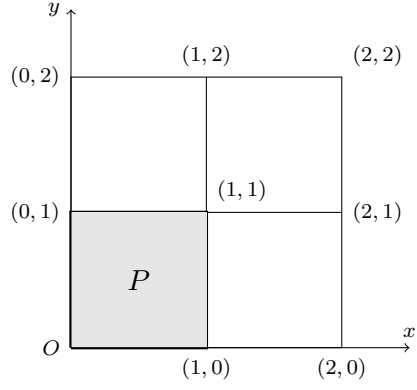
\begin{figure}[h]
\begin{center}
\begin{tikzpicture}[scale=0.90]
\usetikzlibrary{patterns}
\draw (0,0)--(0,4)--(4,4)--(4,0)--(0,0);
\draw (0,2)--(4,2) (2,0)--(2,4);
\draw[line width = 1.25pt] (0,0)--(0,2)--(2,2)--(2,0)--(0,0);
\fill[black!10](0,0)--(0,2)--(2,2)--(2,0)--(0,0);
\draw (1,1) node {\small$P$};
\draw (0,0) node[anchor=east] {\tiny$O$};
\draw (0,4) node[anchor=east] {\tiny$(0,2)$};
\draw (4,4) node[anchor=south west] {\tiny$(2,2)$};
\draw (4,0) node[anchor=north] {\tiny$(2,0)$};
\draw (2,4) node[anchor=south] {\tiny$(1,2)$};
\draw (4,2) node[anchor=west] {\tiny$(2,1)$};
\draw (0,2) node[anchor=east] {\tiny$(0,1)$};
\draw (2,2) node[anchor=south west] {\tiny$(1,1)$};
\draw (2,0) node[anchor=north] {\tiny$(1,0)$};
\draw[->] (0,0) -- (5,0)
node[above] {\tiny $x$};
\draw[->] (0,0) -- (0,5)
node[left] {\tiny $y$};
\end{tikzpicture}
\end{center}
\caption{The Newton polygons associated with $F$ and $P$, with $P$ in bold}
\label{fig:ex_1}
\end{figure}

By Lemma~\ref{lem:c=p^2 most generalized}, there exists a polynomial $P = P_{1,1}xy + P_{1,0}x + P_{0,1}y + P_{0,0}$ such that $\text{supp}(F - P^2)$ is contained in $\{(0,2),(0,1),(0,0),(1,0),(2,0)\}$. We  immediately deduce that:
\begin{align*}
    \lambda_{2,2} & = P_{1,1}^2, \\
   \lambda_{2,1} & = 2P_{1,1}P_{1,0}, \\
    \lambda_{1,2} & = 2P_{1,1}P_{0,1}, \\
    \lambda_{1,1} & = 2P_{1,0}P_{0,1} + 2P_{1,1}P_{0,0}. \\
\end{align*}
Suppose that $[\ff,\GG]\in K$. Applying  \eqref{Magnus_formula} to $w=(0, 1)$ and $\mu=0$, we have
$$\lambda_{2,2}x^2y^2 + \lambda_{1, 2}xy^2 + \lambda_{0,2}y^2 = P_{1,1}^2 x^2y^2 + 2P_{1,1}P_{0,1} xy^2 + \lambda_{0,2}y^2 = (P_{1,1}' xy + P_{0,1}'y)^2$$  for some constants $P_{1,1}' ,P_{0,1}'\in K$, which 
    implies $\lambda_{0,2} = P_{0,1}^2$ and $(P_{1,1} xy + P_{0,1}y)^2=(P_{1,1}' xy + P_{0,1}'y)^2$. 

Applying \eqref{Magnus_formula} to $w=(0, 1)$ and $\mu=2$, we get that
$\lambda_{2,1}x^2y +  \lambda_{1,1}xy +  \lambda_{0,1}y$ is divisible by  $P_{1,1}' xy + P_{0,1}'y$, hence divisible by $P_{1,1} xy + P_{0,1}y$. This means that
$$\aligned
\lambda_{2,1}x^2y +  \lambda_{1,1}xy +  \lambda_{0,1}y &= 2P_{1,1}P_{1,0}x^2y +  (2P_{1,0}P_{0,1} + 2P_{1,1}P_{0,0})xy +  \lambda_{0,1}y \\
&= 2(P_{1,1}xy + P_{0,1}y)(P_{1,0}'x + P_{0,0}') \endaligned$$
 for some constants $P_{1,0}' ,P_{0,0}'\in K$, which 
    implies  $\lambda_{0,1} = 2P_{0,1}P_{0,0}$. 

Similarly, applying Magnus' formula to $w=(1, 0)$, we also have
$\lambda_{2,0} = P_{1,0}^2$
and
$\lambda_{1,0} = 2P_{1,0}P_{0,0}$.
From here, it follows that $F - P^2$ is a constant.
\end{example}

\section{Construction of broken lines and proof of Theorem~\ref{main_thm}}
In this section, we prove Theorem~\ref{main_thm}. Besides from using 
Proposition~\ref{application_of_Magnus}, 
Corollary~\ref{application2_of_Magnus} and
Lemma~\ref{lem:c=p^2 most generalized}, the rest  is a purely combinatorial analysis on subsets of $\mathbb{R}^2$ and $\mathbb{Z}^2$. Suppose that $(\ff,\GG)$ has generic boundaries with $a=2$. 

Denote the point of origin by $O=A_0=B_0$. Now we extend the result from Section~\ref{rec} to the general case that  $N(\ff)$ is of arbitrary shape. We say that a vertex $C=(c_x,c_y)\in N(F)$ is  \emph{northeastern} if $(v_x-c_x,v_y-c_y)\not\in\mathbb{Z}_{\ge0}^2$  for any other vertex $V=(v_x,v_y)\in N(F)$. Observe that a northeastern vertex exists.

Let $N(\ff)=OA_1\cdots A_{\alpha-1}CB_{\beta-1}\cdots B_1$ where $C$ is  a northeastern vertex of $N(\ff)$.  We denote $A_\alpha=B_\beta=C$, $C'=\frac{1}{2}C$.  Since $(1,0), (0,1)\in N(F)$, we must have that $A_1$ lies on the $y$-axis and  $B_1$ lies on the $x$-axis. Without loss of generality, assume $\alpha>1$ (but we allow $\beta=1$).

Let $\mathcal{N}'=\frac{1}{2} N(\ff)$ and $\mathcal{N}''=\mathcal{N}'+\frac{1}{2}\overrightarrow{OC}$ be the polygons shown in Figure \ref{fig:newton arbitrary}.

\subsection{Construction of parallelograms associated with $N(F)$}

For $0\le i\le j\le \alpha$, define $A_{ij}=\frac{1}{2}(A_i+A_j)\in \mathbb{R}^2$. In particular, $A_{i,i}=A_i$. 

We define  the parallelogram $\mathcal{P}_{ij}$ ($1\le i<j\le \alpha$) by its four  vertices $A_{i-1,j-1}$, $A_{i-1,j}$, $A_{i,j-1}$, $A_{i,j}$. For convenience, we call the line segments $\overline{A_{i-1,j-1}A_{i,j-1}}$, $\overline{A_{i,j-1}A_{ij}}$, $\overline{A_{ij}A_{i-1,j}}$, $\overline{A_{i-1,j}A_{i-1,j-1}}$ the west, north, east, south edges of $\mathcal{P}_{ij}$, respectively. 
Similarly we also define the parallelogram $\mathcal{P}'_{ij}$ ($1\le i<j\le \beta$). See Figure \ref{fig:newton arbitrary}.

\begin{figure}[h]
\begin{center}
\begin{tikzpicture}[scale=0.60]
\usetikzlibrary{patterns}
\draw (0,0)--(0,2)--(2,6)--(6,8)--(10,6)--(10,4)--(6,0);
\draw (0,0)--(0,1)--(1,3)--(3,4)--(5,3)--(5,2)--(3,0);
\draw (5,3)--(5,4)--(6,6)--(8,7)--(10,6)--(10,5)--(8,3)--(5,3);
\fill [black!10]  (0,0)--(0,1)--(1,3)--(3,4)--(5,3)--(5,2)--(3,0);
\fill [black!10]  (5,3)--(5,4)--(6,6)--(8,7)--(10,6)--(10,5)--(8,3)--(5,3);
\draw (1,3)--(1,4)--(3,5)--(5,4) (3,4)--(3,5)--(4,7)--(6,6);
\draw (8,3)--(8,2)--(5,2);
\draw[dotted] (0,0)--(10,6);
\draw (0,0) node[anchor=east] {\tiny$O$};
\draw (0,2) node[anchor=east] {\tiny$A_1$};
\draw (2,6) node[anchor=south east] {\tiny$A_2$};
\draw (6,8) node[anchor=south] {\tiny$A_3$};
\draw (10,6) node[anchor=west] {\tiny$C=A_4=B_3$};
\draw (5,2.9) node[anchor=south west] {\tiny$C'=A_{04}$};
\draw (10,4) node[anchor=west] {\tiny$B_2$};
\draw (6,0) node[anchor=north] {\tiny$B_1$};
\draw(7.5,5) node {\tiny$\mathcal{N}''$};
\draw(2.5,2) node {\tiny$\mathcal{N}'$};
\draw(0.6,1.8) node[anchor=south] {\tiny$\mathcal{P}_{12}$};
\draw(2,3.6) node[anchor=south] {\tiny$\mathcal{P}_{13}$};
\draw(4,3.7) node[anchor=south] {\tiny$\mathcal{P}_{14}$};
\draw(2.5, 5) node[anchor=south] {\tiny$\mathcal{P}_{23}$};
\draw(4.5, 5) node[anchor=south] {\tiny$\mathcal{P}_{24}$};
\draw(6,6.5) node[anchor=south] {\tiny$\mathcal{P}_{34}$};
\draw(5.5,0.7) node[anchor=south] {\tiny$\mathcal{P}'_{12}$};
\draw(6.5,2.1) node[anchor=south] {\tiny$\mathcal{P}'_{13}$};
\draw(8.9,3) node[anchor=south] {\tiny$\mathcal{P}'_{23}$};
\draw[->] (0,0) -- (10,0)
node[above] {\tiny $x$};
\draw[->] (0,0) -- (0,7)
node[left] {\tiny $y$};
\draw (0,1) node[anchor=east] {\tiny$A_{01}$};
\draw (.8,2.9) node[anchor=west] {\tiny$A_{02}$};
\draw (3,4) node[anchor=north] {\tiny$A_{03}$};
\draw (5,4) node[anchor=west] {\tiny$A_{14}$};
\draw (5.9,6) node[anchor=west] {\tiny$A_{24}$};
\draw (8,7) node[anchor=north] {\tiny$A_{34}$};
\begin{scope}[shift={(14,0)}]
\usetikzlibrary{patterns}
\draw[black!20]  (0,0)--(0,2)--(2,6)--(6,8)--(10,6)--(10,4)--(6,0);
\draw[black!20]  (0,0)--(0,1)--(1,3)--(3,4)--(5,3)--(5,2)--(3,0);
\draw[black!20]  (5,3)--(5,4)--(6,6)--(8,7)--(10,6)--(10,5)--(8,3)--(5,3);
\fill [black!10]  (0,0)--(0,1)--(1,3)--(3,4)--(5,3)--(5,2)--(3,0);
\fill [black!10]  (5,3)--(5,4)--(6,6)--(8,7)--(10,6)--(10,5)--(8,3)--(5,3);
\draw[black!10]  (1,3)--(1,4)--(3,5)--(5,4) (3,4)--(3,5)--(4,7)--(6,6);
\draw[black!10]  (8,3)--(8,2)--(5,2);
\draw[dotted] (0,0)--(10,6) (0,0)--(1,3) (0,0)--(3,4);
\draw[red] (5.4,7.7)--(9.59,5.75); \fill[red] (9.59,5.75) circle[radius=2pt];
\draw[red] (2,6)--(4,7)--(8.18,4.9); \fill[red] (8.18,4.9) circle[radius=2pt];
\draw[red] (1.5,5)--(3.5,6)--(5.5,5)--(7.05,4.23); \fill[red] (7.05, 4.23) circle[radius=2pt];
\draw[red] (0,2)--(1,4)--(3,5)--(5,4)--(5.9,3.55);\fill[red] (5.9,3.55) circle[radius=2pt];
\draw[red] (0,1.6)--(1,3.6)--(3,4.6)--(5,3.6)--(5.5,3.33);\fill[red] (5.5, 3.33) circle[radius=2pt];
\draw[red] (0,1)--(1,3)--(3,4)--(5,3);\fill[red] (5,3) circle[radius=2pt];
\draw[red] (0,0.6)--(.6,1.8)--(1.8,2.4)--(3,1.8);\fill[red] (3,1.8) circle[radius=2pt];
\draw (0,0) node[anchor=east] {\tiny$O$};
\draw (0,2) node[anchor=east] {\tiny$A_1$};
\draw (2,6) node[anchor=south east] {\tiny$A_2$};
\draw (6,8) node[anchor=south] {\tiny$A_3$};
\draw (10,6) node[anchor=west] {\tiny$C$};
\draw (10,4) node[anchor=west] {\tiny$B_2$};
\draw (6,0) node[anchor=north] {\tiny$B_1$};

\draw[->] (0,0) -- (10,0)
node[above] {\tiny $x$};
\draw[->] (0,0) -- (0,7)
node[left] {\tiny $y$};
\end{scope}
\end{tikzpicture}
\end{center}
\caption{The case where $N(\ff)$ is arbitrary, $\alpha=4,\beta=3$. Left: regions $\mathcal{N}'$, $\mathcal{N}''$, $\mathcal{P}_{ij}$; Right: various broken lines $T_D$ where the red dots are the various positions of the point $D$}
\label{fig:newton arbitrary}
\end{figure}
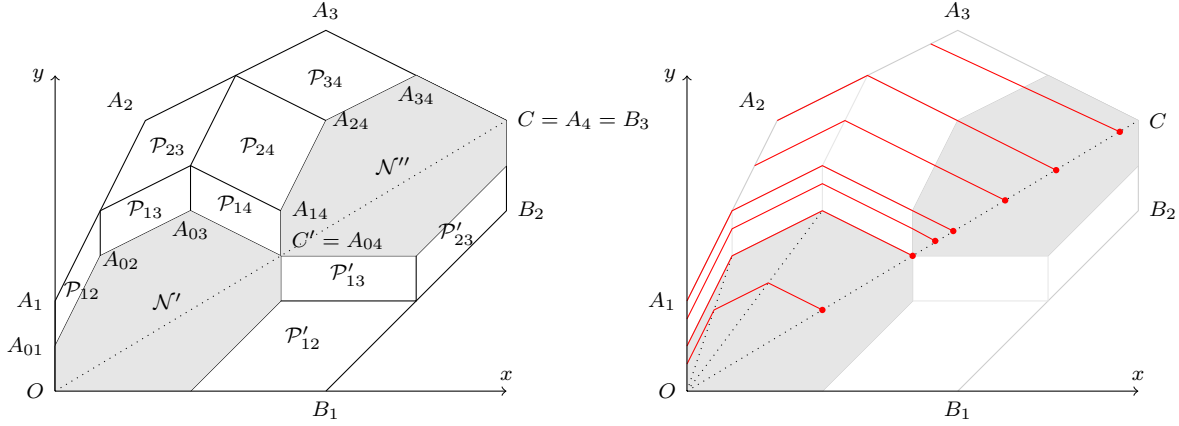

One can verify that \\
\noindent $\bullet$ $\mathcal{P}_{ij}$ is indeed a parallelogram,\\
\noindent $\bullet$  the lengths of the edges of $\mathcal{P}_{ij}$ are equal to $\frac{1}{2}A_{i-1}A_i$ and $\frac{1}{2}A_{j-1}A_j$, \\
\noindent $\bullet$  $\mathcal{P}_{i,j}$ shares edges with $\mathcal{P}_{i,j\pm1}$ and $\mathcal{P}_{i\pm1,j}$ (if the latter are defined). 

We make the following claim.

\begin{lemma}
(a) The union of the parallelograms $\mathcal{P}_{ij}$ is the following (closed and non-convex) polygon:
$$\mathcal{P}:=A_{01}A_{02}\cdots A_{0\alpha}A_{1\alpha}A_{2\alpha}\cdots A_{\alpha-1,\alpha}A_{\alpha-1}A_{\alpha-2}\cdots A_1$$

\noindent (b) These parallelograms do not overlap with each other. More precisely, $\mathcal{P}_{ij}\cap\mathcal{P}_{i,j+1}=\overline{A_{i-1,j}A_{ij}}$,  $\mathcal{P}_{ij}\cap\mathcal{P}_{i+1,j}=\overline{A_{i,j-1}A_{ij}}$, $\mathcal{P}_{ij}\cap \mathcal{P}_{i+1,j+1}=A_{ij}$, $\mathcal{P}_{ij}\cap \mathcal{P}_{i+1,j-1}=A_{i,j-1}$, and $\mathcal{P}_{ij}\cap \mathcal{P}_{i',j'}=\emptyset$ if $|i-i'|>1$ or $|j-j'|>1$.

\end{lemma}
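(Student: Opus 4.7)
The plan is to prove (a) and (b) jointly by working out the combinatorics of how the $\mathcal{P}_{ij}$'s fit together, using the convexity of $N(\ff)$ only once, at the crucial step of interior disjointness.

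\emph{Step 1 (Parallelogram structure and adjacencies.)} A direct calculation with $A_{ij}=\tfrac12(A_i+A_j)$ gives
\[
A_{i,j-1}-A_{i-1,j-1}=\tfrac12(A_i-A_{i-1})=A_{ij}-A_{i-1,j},\qquad A_{i-1,j}-A_{i-1,j-1}=\tfrac12(A_j-A_{j-1})=A_{ij}-A_{i,j-1},
\]
so opposite sides of $\mathcal{P}_{ij}$ are parallel and of equal length, and nondegeneracy follows because the edge vectors $d_k:=A_k-A_{k-1}$ of $\gamma:=OA_1\cdots A_\alpha$ are pairwise non-parallel by strict convexity. Each adjacency formula in (b) then amounts to a comparison of four-element vertex sets: for instance $\mathcal{P}_{ij}$ and $\mathcal{P}_{i,j+1}$ share precisely $A_{i-1,j}$ and $A_{ij}$, which form an edge of both.

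\emph{Step 2 (Interior disjointness, the main obstacle.)} Parametrize $\mathcal{P}_{ij}=\{\tfrac12(p+q):p\in E_i,\,q\in E_j\}$ where $E_k:=\overline{A_{k-1}A_k}$. An overlap of the interiors of $\mathcal{P}_{ij}$ and $\mathcal{P}_{i'j'}$ would produce two distinct unordered pairs $\{p,q\}\ne\{p',q'\}$ on $\gamma=E_1\cup\cdots\cup E_\alpha$ with $p+q=p'+q'$. The key geometric input is that $\gamma$ is a strictly convex arc of total tangent variation strictly less than $\pi$: its first edge $d_1$ is along the positive $y$-axis (because $A_1$ lies on the $y$-axis), and its last edge $d_\alpha$ has strictly positive $x$-component with angle strictly above $-\pi/2$---otherwise $A_{\alpha-1}$ would lie on or above the vertical line through $C$, which together with convexity contradicts the northeastern property of $C$. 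Hence after a small rotation of the coordinate axes $\gamma$ becomes the graph of a strictly convex function $y=f(x)$, and the equation $p+q=p'+q'$ becomes $u+v=u'+v'$ together with $f(u)+f(v)=f(u')+f(v')$. Fixing the common sum $s:=u+v$ and setting $g(u):=f(u)+f(s-u)$ gives $g''(u)=f''(u)+f''(s-u)>0$; so $g$ is strictly convex, and $g(u)=g(u')$ forces $\{u,v\}=\{u',v'\}$, the desired contradiction.

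\emph{Step 3 (Identifying the boundary.)} Once interior disjointness is in hand, the boundary of $\bigcup_{i<j}\mathcal{P}_{ij}$ is the union of those parallelogram edges that are not shared with any neighbor. A short case analysis of the four edge types of $\mathcal{P}_{ij}$ shows the unshared edges are precisely the ``south'' edges of $\mathcal{P}_{1,j}$ for $2\le j\le\alpha$, the ``east'' edges of $\mathcal{P}_{i,\alpha}$ for $1\le i\le\alpha-1$, and the ``west'' and ``north'' edges of the diagonal blocks $\mathcal{P}_{i,i+1}$ for $1\le i\le\alpha-1$. Concatenating them in cyclic order produces the broken line
\[
A_{01}A_{02}\cdots A_{0\alpha}\,A_{1\alpha}\cdots A_{\alpha-1,\alpha}\,A_{\alpha-1}A_{\alpha-2}\cdots A_1A_{01}.
\]
The intermediate midpoints $A_{i,i+1}$ ($1\le i\le\alpha-1$) lie on this boundary but are not vertices of $\mathcal{P}$: the north edge of $\mathcal{P}_{i,i+1}$ and the west edge of $\mathcal{P}_{i+1,i+2}$ both have direction $\tfrac12 d_{i+1}$, so they merge through $A_{i,i+1}$ into the straight segment $\overline{A_iA_{i+1}}$. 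At each $A_i$ ($1\le i\le\alpha-1$), by contrast, the two incident boundary edges have the non-parallel directions $\tfrac12 d_i$ and $\tfrac12 d_{i+1}$, so $A_i$ is a genuine corner. This gives (a), and together with Step 2 completes (b).
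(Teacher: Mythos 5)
Your Step 2 is the heart of the argument, and as written it does not hold up. First, the statement you reduce to --- that two distinct unordered pairs $\{p,q\}\neq\{p',q'\}$ of points of $\gamma$ with $p+q=p'+q'$ cannot exist --- is false: if $p$ and $q$ lie on the same edge $E_k$, you can replace them by $p+\delta d_k$ and $q-\delta d_k$ without changing the sum. The claim only becomes true once you use the hypothesis that the pairs come from parallelograms $\mathcal{P}_{ij}$ with $i<j$ (so that $p$ and $q$ lie on \emph{distinct} edges, in their relative interiors for interior points), and your argument never invokes this. Second, the convexity computation meant to prove the claim is not available: after the small rotation, $\gamma$ is the graph of a \emph{piecewise linear} function, which is concave (its slopes strictly decrease), not convex, and in no case strictly so; $f''$ vanishes wherever it exists, so ``$g''(u)=f''(u)+f''(s-u)>0$'' is unfounded. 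Moreover, even for a genuinely strictly convex $g$, $g(u)=g(u')$ does not force $\{u,v\}=\{u',v'\}$ unless you also use that $g$ is symmetric about $w=s/2$, which you never state. The argument can be repaired --- $g(w)=f(w)+f(s-w)$ is concave and symmetric about $s/2$, hence monotone on each side of $s/2$, and it is locally constant exactly where $w$ and $s-w$ lie on the same edge, which is precisely what $i<j$, $i'<j'$ excludes --- but that repair is exactly the missing content, and it is where the distinct-edge hypothesis must enter.

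Beyond Step 2, the precise statements of the lemma are asserted rather than derived. Interior disjointness plus ``comparison of four-element vertex sets'' does not yield, for example, $\mathcal{P}_{ij}\cap\mathcal{P}_{i',j'}=\emptyset$ when $|i-i'|>1$ (a statement about boundaries), nor that adjacent parallelograms meet in exactly the stated edge or vertex; for these you need the full (repaired) injectivity statement for the sum map on pairs of edge points, followed by the case analysis you only gesture at. Likewise, Step 3 identifies $\bigcup_{i<j}\mathcal{P}_{ij}$ with the region enclosed by the unshared edges, which presupposes that the cells glue edge-to-edge and that the union has no holes --- part of what is being proved. By contrast, the paper proves (a) directly by realizing $\mathcal{P}$ as the Minkowski sum $L_1+L_2$ and writing any point as $O+\sum_{k=1}^{i-1}{\bf v}_k+\sum_{k=1}^{j-1}{\bf v}_k+s{\bf v}_i+t{\bf v}_j$, and proves (b) by equating two such representations and solving the coefficient equations, using only that ${\bf v}_1,\dots,{\bf v}_\alpha$ lie strictly in a half plane in clockwise order; that route delivers the exact intersections at once and avoids the boundary and no-hole issues your outline leaves open.
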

\begin{proof}
(a) Given any point $r\in \mathcal{P}$, we assert that $r$ is in some $\mathcal{P}_{ij}$.

Let ${\bf v}_i$ be the vector $\frac{1}{2}\overrightarrow{A_{i-1}A_i}$ for $1\le i\le \alpha$. Then 
$A_{ij}=O+\sum_{k=1}^{i}{\bf v}_k+\sum_{k=1}^{j}{\bf v}_k$, and 
the parallelogram $\mathcal{P}_{ij}=\{A_{i-1,j-1}+s{\bf v}_i+t{\bf v}_j|0\le s,t\le 1\}$. 
The assumption that $C$ is a northeastern vertex of $N(F)$ has the following consequences: $\mathcal{N}'\cap \mathcal{N}''=\{C'\}$, and ${\bf v}_1,\dots, {\bf v}_\alpha$ are in clockwise order and (strictly) in the same half plane $y+\lambda x>0$ for a sufficiently large constant $\lambda\gg 0$. 

One can see that $\mathcal{P}=L_1+L_2$, the Minkowski sum of the following two broken lines 
$$L_1:=OA_{01}A_{02}\cdots A_{0,\alpha-1} \quad \text{ and } \quad L_2:=A_{01}A_{02}\cdots A_{0,\alpha-1}A_{0,\alpha}.$$
It can be visualized as follows: as a point $p$ moves along $L_1$, the broken line $p+L_2$ sweeps out the region $\mathcal{P}$. As a consequence, any point $r\in\mathcal{P}$ is the sum of a point $p\in L_1$ and a point $q\in L_2$. Without loss of generality we may assume that  $p$ lies in $\overline{A_{0,i-1}A_{0,i}}$ and $q$ lies in $\overline{A_{0,j-1}A_{0,j}}$ for some $i\le j$. 
Then
$p=O+\sum_{k=1}^{i-1}{\bf v}_k+s{\bf v}_i$ and $q=O+\sum_{k=1}^{j-1}{\bf v}_k+t{\bf v}_j$. We consider three cases:

\begin{figure}[h]
\begin{center}
\begin{tikzpicture}[scale=0.60]
\usetikzlibrary{patterns}
\draw[black!20]  (0,0)--(0,2)--(2,6)--(6,8)--(10,6)--(10,4)--(6,0);
\draw[black!20]  (0,0)--(0,1)--(1,3)--(3,4)--(5,3)--(5,2)--(3,0);
\draw[black!20]  (5,3)--(5,4)--(6,6)--(8,7)--(10,6)--(10,5)--(8,3)--(5,3);
\fill [black!10]  (0,0)--(0,1)--(1,3)--(3,4)--(5,3)--(5,2)--(3,0);
\fill [black!10]  (5,3)--(5,4)--(6,6)--(8,7)--(10,6)--(10,5)--(8,3)--(5,3);
\draw[black!10]  (1,3)--(1,4)--(3,5)--(5,4) (3,4)--(3,5)--(4,7)--(6,6);
\draw[black!10]  (8,3)--(8,2)--(5,2);
\draw[dotted] (0,0)--(10,6) (0,0)--(1,3) (0,0)--(3,4);
\draw (0,0) node[anchor=east] {\tiny$O$};
\draw (0,2) node[anchor=east] {\tiny$A_1$};
\draw (2,6) node[anchor=south east] {\tiny$A_2$};
\draw (6,8) node[anchor=south] {\tiny$A_3$};
\draw (10,6) node[anchor=west] {\tiny$C$};
\draw (10,4) node[anchor=west] {\tiny$B_2$};
\draw (6,0) node[anchor=north] {\tiny$B_1$};
\draw[->] (0,0) -- (10,0)
node[above] {\tiny $x$};
\draw[->] (0,0) -- (0,7)
node[left] {\tiny $y$};
\draw[red] (0,1)--(1,3)--(3,4)--(5,3);\fill[blue] (0,1) circle[radius=2pt];\fill[red] (5,3) circle[radius=2pt];
\begin{scope}[shift={(0,.5)}]
\draw[red] (0,1)--(1,3)--(3,4)--(5,3);\fill[blue] (0,1) circle[radius=2pt];\fill[red] (5,3) circle[radius=2pt];
\end{scope}
\begin{scope}[shift={(0,1)}]
\draw[red] (0,1)--(1,3)--(3,4)--(5,3);\fill[blue] (0,1) circle[radius=2pt];\fill[red] (5,3) circle[radius=2pt];
\end{scope}
\begin{scope}[shift={(0.25,1.5)}]
\draw[red] (0,1)--(1,3)--(3,4)--(5,3);\fill[blue] (0,1) circle[radius=2pt];\fill[red] (5,3) circle[radius=2pt];
\end{scope}
\begin{scope}[shift={(.5,2)}]
\draw[red] (0,1)--(1,3)--(3,4)--(5,3);\fill[blue] (0,1) circle[radius=2pt];\fill[red] (5,3) circle[radius=2pt];
\end{scope}
\begin{scope}[shift={(.75,2.5)}]
\draw[red] (0,1)--(1,3)--(3,4)--(5,3);\fill[blue] (0,1) circle[radius=2pt];\fill[red] (5,3) circle[radius=2pt];
\end{scope}
\begin{scope}[shift={(1,3)}]
\draw[red] (0,1)--(1,3)--(3,4)--(5,3);\fill[blue] (0,1) circle[radius=2pt];\fill[red] (5,3) circle[radius=2pt];
\end{scope}
\begin{scope}[shift={(1.5,3.25)}]
\draw[red] (0,1)--(1,3)--(3,4)--(5,3);\fill[blue] (0,1) circle[radius=2pt];\fill[red] (5,3) circle[radius=2pt];
\end{scope}
\begin{scope}[shift={(2,3.5)}]
\draw[red] (0,1)--(1,3)--(3,4)--(5,3);\fill[blue] (0,1) circle[radius=2pt];\fill[red] (5,3) circle[radius=2pt];
\end{scope}
\begin{scope}[shift={(2.5,3.75)}]
\draw[red] (0,1)--(1,3)--(3,4)--(5,3);\fill[blue] (0,1) circle[radius=2pt];\fill[red] (5,3) circle[radius=2pt];
\end{scope}
\begin{scope}[shift={(3,4)}]
\draw[red] (0,1)--(1,3)--(3,4)--(5,3);\fill[blue] (0,1) circle[radius=2pt];\fill[red] (5,3) circle[radius=2pt];
\end{scope}
\end{tikzpicture}
\end{center}
\caption{The red broken lines are $p+L_2$ whose left endpoint $p$ is blue and right endpoint is red.}
\label{fig:sweep}
\end{figure}
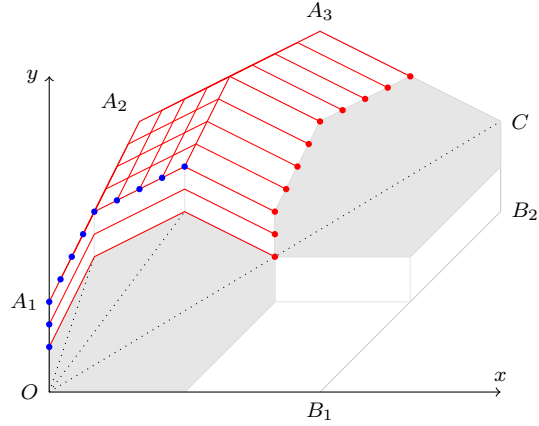

If $i<j$, then $r=p+q=A_{i-1,j-1}+s{\bf v}_i+t{\bf v}_j$ lies in $\mathcal{P}_{i,j}$. 

If $i=j$ and $s+t\le 1$, then  $r=p+q=A_{i-1,i-1}+(s+t){\bf v}_i=A_{i-2,i-1}+1{\bf v}_{i-1}+(s+t){\bf v}_i$ lies in $\mathcal{P}_{i-1,i}$. 

If $i=j$ and $s+t> 1$, then  $r=p+q=A_{i-1,i-1}+(s+t){\bf v}_i=A_{i-1,i}+(s+t-1){\bf v}_i+0{\bf v}_{i+1}$ lies in $\mathcal{P}_{i,i+1}$. 

So we have proved that in all cases, the point $r$ lies in some parallelogram $\mathcal{P}_{ij}$. This proves (a).

\noindent (b) Assume that there exists a point $r\in\mathcal{P}_{ij}\cap\mathcal{P}_{i'j'}$ where $i<j$ and $i'<j'$ and $i\le i'$. Then 
$$r=O+\sum_{k=1}^{i-1}{\bf v}_k+\sum_{k=1}^{j-1}{\bf v}_k+s{\bf v}_i+t{\bf v}_j=O+\sum_{k=1}^{i'-1}{\bf v}_k+\sum_{k=1}^{j'-1}{\bf v}_k+s'{\bf v}_{i'}+t'{\bf v}_{j'}$$
for some $s,t,s',t'\in [0,1]$.  

If $i'=i$ and $j'>j$, then $s{\bf v}_i+t{\bf v}_j=\sum_{k=j}^{j'-1}{\bf v}_k+s'{\bf v}_i+t'{\bf v}_{j'}$, hence $(s-s'){\bf v}_i=(1-t){\bf v}_j+\sum_{k=j+1}^{j'-1}{\bf v}_k+t'{\bf v}_{j'}$. Note that ${\bf v}_i, {\bf v}_j,{\bf v}_{j+1},\dots,{\bf v}_{j'}$ lie in a half plane and are in the clockwise order. So we must have $j'=j+1$ and $s-s'=1-t=t'=0$. In this case, $r=A_{i-1,j}+s{\bf v}_i\in \overline{A_{i-1,j}A_{ij}}$.

If $j'=j$ and $i'>i$, then by a similar argument we have $i'=i+1$ and $r\in \overline{A_{i,j-1}A_{ij}}$.

If $i'>i$ and $j'>j$, then $s{\bf v}_i+t{\bf v}_j=\sum_{k=i}^{i'-1}{\bf v}_k+\sum_{k=j}^{j'-1}{\bf v}_k+s'{\bf v}_{i+1}+t'{\bf v}_{j+1}$, $(1-s){\bf v}_i+(1-t){\bf v}_j+\sum_{k=i+1}^{i'-1}{\bf v}_k+\sum_{k=j+1}^{j'-1}{\bf v}_k+s'{\bf v}_{i+1}+t'{\bf v}_{j+1}=0$. Note that all ${\bf v}_k$ lie in a half plane, we must have $i'=i+1, j'=j+1$, $1-s=1-t=s'=t'=0$. In this case, $r=A_{ij}$.

If $i'>i$ and $j'<j$, then 
$\sum_{k=j'}^{j-1}{\bf v}_k+s{\bf v}_i+t{\bf v}_j=\sum_{k=i}^{i'-1}{\bf v}_k+s'{\bf v}_{i'}+t'{\bf v}_{j'}$, 
$(1-t'){\bf v}_{j'}+\sum_{k=j'+1}^{j-1}{\bf v}_k+t{\bf v}_j=(1-s){\bf v}_i+\sum_{k=i+1}^{i'-1}{\bf v}_k+s'{\bf v}_{i'}$. Note that ${\bf v}_{i},\dots, {\bf v}_{i'},{\bf v}_{j'},\dots,{\bf v}_{j}$ lie in a half plane and are in the clockwise order, we must have $i'=i+1$, $j'=j-1$, $1-s'=s=1-t=t'=0$. In this case, $r=A_{i,j-1}$.

This proves all cases for (b). 
\end{proof}

\subsection{Construction of broken lines}

Next we construct broken lines $T_D$'s and  $T'_D$'s.

\noindent (a) For every point $D$ on the line segment $\overline{C'C}$, we draw a broken line
$T_D$ that goes to the left until it reaches the boundary of $N(\ff)$, as follows: 

Step 1. First goes in the direction $\overrightarrow{A_\alpha A_{\alpha-1}}$ until it reaches a point $D_{\alpha-1}$ on the west boundary of $\mathcal{P}_{r,\alpha}$ for some $r$;

Step 2. Then goes in the direction $\overrightarrow{A_{\alpha-1} A_{\alpha-2}}$ until it reaches  a point $D_{\alpha-2}$ on the west boundary of $\mathcal{P}_{r,\alpha-1}$;

Step 3. then goes in the direction $\overrightarrow{A_{\alpha-2} A_{\alpha-3}}$ until it reaches  a point $D_{\alpha-3}$ on the west boundary of $\mathcal{P}_{r,\alpha-2}$;

$\cdots$

Step $(\alpha-r)$. Finally, goes in the direction $\overrightarrow{A_{r+1} A_{r}}$ until it reaches   a point $D_{r}$ on the west boundary of $\mathcal{P}_{r,r+1}$. 

If $\overline{DD_{\alpha-1}}$ does not contain the north edge of $P_{r,\alpha}$ for any $r\ge1$, then define $T_D=DD_{\alpha-1}D_{\alpha-2}\cdots D_r$. Note that it now reaches the boundary of $N(\ff)$. 

If $\overline{DD_{\alpha-1}}$ contains the north edge of $P_{r,\alpha}$ for some $r\ge1$, we define $T_D$ to be the broken line consisting of $\overline{DD_{\alpha-1}}$ and the north edges of $P_{r,\alpha-1}$, $P_{r,\alpha-2}$,..., $P_{r,r+1}$, and ending at $A_{r}$.

\noindent (b) Still for every point $D$ on the line segment $\overline{C'C}$, similarly  we define a broken line $T'_D$ that goes down whose linear pieces are parallel to $\overrightarrow{B_i B_{i-1}}$ for $i\le\beta$.
Note that for $D=C'$, $T_{C'}=\frac{1}{2} A_{\alpha}A_{\alpha-1}\cdots A_1=A_{0\alpha}A_{0,\alpha-1}\cdots A_{01}$, and $T'_{C'}=\frac{1}{2} B_{\beta}B_{\beta-1}\cdots B_1=B_{0\beta}B_{0,\beta-1}\cdots B_{01}$.  

\noindent (c) For every $D\in\overline{OC'}\setminus\{O\}$, define $T_D=D_\alpha D_{\alpha-1}\cdots D_1=\frac{||OD||}{||OC||}A_\alpha A_{\alpha-1}\cdots A_1$ (rescaling the boundary broken line from $A_\alpha$ to $A_1$ so that it passes the point $D=D_\alpha$).

\noindent (d) For every $D\in\overline{OC'}\setminus\{O\}$, define $T'_D=D_\beta D_{\beta-1} \cdots D_1=\frac{||OD||}{||OC||}B_\beta B_{\beta-1}\cdots B_1$ (rescaling the boundary broken line from $B_\beta$ to $B_1$ so that it passes the point $D=D_\beta$).
\smallskip

Note that every point on $N(\ff)\setminus \overline{OC}$ lies in a unique $T_D$ or $T'_D$. 

\subsection{Proof of Theorem~\ref{main_thm}}
Let $P$ be a polynomial given by Lemma~\ref{lem:c=p^2 most generalized}, and let $R=F-P^2$.  We have the following generalization of Proposition \ref{sum_PP}.

\begin{proposition}\label{sum_PP generalized} Suppose that $(F,G)$ has generic boundaries with $a = 2$. In the above setting, $R$ must be a constant; that is,  $\ff=P^2+u_0$ for some constant $u_0\in K$. 
\end{proposition}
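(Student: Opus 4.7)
The plan is to establish $R_{d-h}=0$ for every $w$-level of every direction corresponding to an edge of $N(\ff)$, via an inductive argument organized by the broken lines $T_D, T'_D$. Proposition~\ref{application_of_Magnus} and Corollary~\ref{application2_of_Magnus} are applied repeatedly, one edge at a time, while the broken lines track which levels have been cleared and which monomials of $R$ can still be nonzero.

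First I would handle the region between $\mathcal{N}''$ and the upper boundary $OA_1\cdots A_{\alpha-1}C$. For each edge $\overline{A_{i-1}A_i}$ pick the unique direction $w \in W$ making it the $w$-leading edge of $N(\ff)$; then $F_d$ is supported on $\overline{A_{i-1}A_i}$, $P_m$ on $\tfrac{1}{2}\overline{A_{i-1}A_i}$, and the base case $R_d=0$ is immediate from $\ff_d=P_m^2$. For the inductive step at level $d-h$, the vanishings already established, combined with Lemma~\ref{lem:c=p^2 most generalized}, force $\supp(R_{d-h})$ into the intersection of the $w$-level line with $N(\ff)\setminus\mathcal{N}''$, minus the already-cleared parallelograms. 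The broken line $T_D$ (with $D\in\overline{C'C}$ at the appropriate depth) is precisely this residual segment, and a direct lattice-point count shows $\text{len}(R_{d-h})<\text{len}(P_m)=\tfrac{1}{2}\text{len}(\overline{A_{i-1}A_i})$, so Corollary~\ref{application2_of_Magnus} yields $R_{d-h}=0$. Sweeping $D$ from $C$ down to $C'$ eliminates $R$ on all of the upper parallelograms $\mathcal{P}_{ij}$, and the symmetric construction with $T'_D$ and the edges $\overline{B_{j-1}B_j}$ clears the lower parallelograms $\mathcal{P}'_{ij}$.

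After this first pass $\supp(R)\subset\mathcal{N}'\cup\overline{OC}$. Repeating the same induction inside the smaller polygon $\mathcal{N}'$ using the broken lines $T_D, T'_D$ for $D\in\overline{OC'}\setminus\{O\}$, which by construction are rescaled copies of the outer boundary chains, successively clears every point of $\mathcal{N}'$ other than the origin. Therefore $\supp(R)\subset\{O\}$, i.e.\ $R=u_0$ for some $u_0\in K$. This yields $\ff=P^2+u_0$, and then the computation $[\ff,\GG]=2P[P,\GG]\in K$, exactly as in Corollary~\ref{sum_PP_imply}, forces $P\in K$ (or $[P,\GG]=0$), and in either case $[\ff,\GG]=0$.

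The main obstacle I anticipate is verifying the length estimate $\text{len}(R_{d-h})<\text{len}(P_m)$ at the transition points: as $D$ moves along $\overline{C'C}$ and $T_D$ passes through the turning points $D_{\alpha-1},\ldots,D_r$, the ``active'' direction $w$ changes from one edge of $N(\ff)$ to the next, and the argument must switch between the corresponding choices of $w$ while correctly accounting for the vanishings already established on neighboring parallelograms. The piecewise definition of $T_D$, and in particular the special treatment of the case when $\overline{DD_{\alpha-1}}$ contains an entire north edge of some $\mathcal{P}_{r,\alpha}$, is designed precisely to bookkeep these transitions; confirming the strict length inequality uniformly through all of them is the delicate combinatorial core of the proof.
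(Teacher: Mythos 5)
Your overall strategy is the same as the paper's: use Lemma~\ref{lem:c=p^2 most generalized} to get $\supp(R)\cap\mathcal{N}''=\emptyset$, then clear $\supp(R)$ inward along the broken lines $T_D$, $T'_D$, applying Proposition~\ref{application_of_Magnus} and Corollary~\ref{application2_of_Magnus} through the length bound $\text{len}(R_{d-h})<\text{len}(P_m)$, with the direction $w$ switching from edge to edge along each broken line. The genuine gap is in your ordering of the induction. You propose to finish the whole upper region first (sweeping $T_D$ for $D$ from $C$ to $C'$) and only afterwards treat the lower region with the $T'_D$. But the hypothesis of Proposition~\ref{application_of_Magnus} is that the \emph{full} $w$-homogeneous components $R_{d-\ell}$ vanish for every $\ell<h$, and for $w$ normal to an upper edge such a component is supported on an entire level segment of $N(\ff)$ parallel to that edge; this segment crosses $\mathcal{N}''$ and terminates on the $B$-side boundary of $N(\ff)$. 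Beyond a shallow depth (in general, whenever $\beta>1$) these segments contain lattice points lying strictly between $\partial\mathcal{N}''$ and the $B$-side boundary, i.e.\ in the lower parallelograms $\mathcal{P}'_{ij}$, which in your ordering have not yet been cleared. So neither the hypothesis $R_{d-\ell}=0$ ($\ell<h$) nor the confinement of $\supp(R_{d-h})$ to a single short segment is available — the level $d-h$ may also meet those uncleared far-side points, and then $\text{len}(R_{d-h})<\text{len}(P_m)$ need not hold. The same objection applies symmetrically to a lower-first sweep, and again (between the two halves of $\mathcal{N}'$) in your second pass. A related smaller slip: at a given level the residual set is only the first straight segment of $T_D$ (one edge of a parallelogram $\mathcal{P}_{r,\alpha}$, minus an endpoint), not the whole broken line $T_D$, whose later segments must be handled one at a time with the new directions — this is the transition issue you flagged, and it is manageable.

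The paper repairs exactly this by interleaving the two sides: all lines $T_D$ and $T'_D$ containing lattice points are arranged in one sequence $T_1,\dots,T_s$ ordered by weakly decreasing distance from $O$ of $T_i\cap\overline{OC}$, and one proves $\supp(R)\cap T_i=\emptyset$ by induction along this single sequence. Then the previously processed lines on \emph{both} sides of $\overline{OC}$, together with $\supp(R)\cap\mathcal{N}''=\emptyset$, are precisely what force the higher $w$-components of $R$ to vanish and confine $\supp(R_{d-h})$ to one parallelogram edge minus an endpoint (respectively to a strictly rescaled half-edge when $D\in\overline{OC'}\setminus\{O,C'\}$), yielding the strict length inequality at every step. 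With your two-pass ordering replaced by this interleaved ordering, your plan becomes the paper's proof; the concluding deduction $\ff=P^2+u_0$ and $[\ff,\GG]=0$ is then as in Corollary~\ref{sum_PP_imply}.
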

\begin{proof}
We consider the finite sequence of broken lines $T_1,T_2,\dots,T_s$ such that \\
\noindent $\bullet$ each $T_i$ is either $T_D$ or $T'_D$ for some $D\in\overline{OC}\setminus\{O\}$;\\  
\noindent $\bullet$ $\text{supp}(F)\setminus\{O\}\subset \cup_{i=1}^s T_i$;\\
\noindent $\bullet$ each $T_i$ contains a lattice point;\\
\noindent $\bullet$  and the  distance from $O$ to $T_i\cap \overline{OC}$ is no less than the distance from $O$ to $T_{i+1}\cap \overline{OC}$.

To make the order unique, we assume that if $T_i\cap \overline{OC}=T_{i+1}\cap \overline{OC}=D$, then $T_i=T_D$ and $T_{i+1}=T'_D$. In particular, $T_1=\overline{A_\alpha A_{\alpha-1}}$ (recall that $\alpha>1$) and $T_2=\overline{B_\beta B_{\beta-1}}$ if $\beta>1$. 

We will inductively prove that $\text{supp}(R)\cap  T_i=\emptyset$. 

(a) Let $w=(u,v)$ be the normal direction of the edge $\overline{A_\alpha A_{\alpha-1}}$. Let $d=w\text{-deg}(F_+)$ and $m=w\text{-deg}(P_+)$, so $d=2m$. The assumption $[F,G]\in K$ implies that $\ff_d=P_m^2$. This is well known (for instance, see \cite{A,ApOn,NaBa}), and also follows from Theorem~\ref{Magnus_thm} for $\mu=0$. Thus $\text{supp}(R)\cap \overline{A_\alpha A_{\alpha-1}}=\text{supp}(R)\cap  T_1=\emptyset$. Similarly, $\text{supp}(R)\cap \overline{B_\beta B_{\beta-1}}=\text{supp}(R)\cap  T_2=\emptyset$ if $\beta>1$. 

(b) Assume the inductive hypothesis that  $\text{supp}(R)\cap  (T_1 \cup\cdots\cup T_{i-1})=\emptyset$. We consider the broken line $T_i$. There are two cases.

(b1) Suppose $T_i=T_D$ or $T'_D$ for $D=(d_1,d_2)$ in the segment $C'C$. Without loss of generality assume $T_i=T_D=DD_{\alpha-1}D_{\alpha-2}\cdots D_r$. 

\begin{figure}[h]
\begin{center}
\begin{tikzpicture}[scale=0.60]
\begin{scope}[shift={(0,0)}]
\usetikzlibrary{patterns}
\draw[black!20]  (0,0)--(0,2)--(2,6)--(6,8)--(10,6)--(10,4)--(6,0);
\draw[black!20]  (0,0)--(0,1)--(1,3)--(3,4)--(5,3)--(5,2)--(3,0);
\draw[black!20]  (5,3)--(5,4)--(6,6)--(8,7)--(10,6)--(10,5)--(8,3)--(5,3);
\fill [black!10]  (0,0)--(0,1)--(1,3)--(3,4)--(5,3)--(5,2)--(3,0);
\fill [black!10]  (5,3)--(5,4)--(6,6)--(8,7)--(10,6)--(10,5)--(8,3)--(5,3);
\draw[black!10]  (1,3)--(1,4)--(3,5)--(5,4) (3,4)--(3,5)--(4,7)--(6,6);
\draw[black!10]  (8,3)--(8,2)--(5,2);
\draw[dotted] (0,0)--(10,6) (0,0)--(1,3) (0,0)--(3,4);
\draw[red] (0,1.6)--(1,3.6)--(3,4.6)--(5,3.6)--(5.5,3.33)--(8.1,2.05);
\fill[red] (6.2,3) circle[radius=2pt]; \draw (6.2,3) node[anchor=south west] {\tiny$E_1$};
\fill[red] (8.1,2.05) circle[radius=2pt]; \draw (8.2,2) node[anchor=south west] {\tiny$E_2$};
\fill[red] (5.5, 3.33) circle[radius=2pt]; \draw (5.5, 3.33) node[anchor=south west] {\tiny$D$};
\fill[red] (0,1.6)  circle[radius=2pt]; \draw (0,1.6) node[anchor=south east] {\tiny$D_r$};
\fill[red] (1,3.6)  circle[radius=2pt]; \draw (1,3.6) node[anchor=south] {\tiny$D_{r+1}$};
\fill[red] (3,4.6)  circle[radius=2pt]; \draw (3,4.6) node[anchor=south] {\tiny$D_{\alpha-1}$};
\fill[red] (5,3.6) circle[radius=2pt]; \draw (5,3.6) node[anchor=south] {\tiny$D_{\alpha}$};
\draw (0,0) node[anchor=east] {\tiny$O$};

\draw[->] (0,0) -- (10,0)
node[above] {\tiny $x$};
\draw[->] (0,0) -- (0,7)
node[left] {\tiny $y$};
\end{scope}
\begin{scope}[shift={(12,0)}]
\usetikzlibrary{patterns}
\draw[black!20]  (0,0)--(0,2)--(2,6)--(6,8)--(10,6)--(10,4)--(6,0);
\draw[black!20]  (0,0)--(0,1)--(1,3)--(3,4)--(5,3)--(5,2)--(3,0);
\draw[black!20]  (5,3)--(5,4)--(6,6)--(8,7)--(10,6)--(10,5)--(8,3)--(5,3);
\fill [black!10]  (0,0)--(0,1)--(1,3)--(3,4)--(5,3)--(5,2)--(3,0);
\fill [black!10]  (5,3)--(5,4)--(6,6)--(8,7)--(10,6)--(10,5)--(8,3)--(5,3);
\draw[black!10]  (1,3)--(1,4)--(3,5)--(5,4) (3,4)--(3,5)--(4,7)--(6,6);
\draw[black!10]  (8,3)--(8,2)--(5,2);
\draw[dotted] (0,0)--(10,6) (0,0)--(1,3) (0,0)--(3,4);
\draw[red] (0,0.6)--(.6,1.8)--(1.8,2.4)--(3,1.8)--(6.2,0.2);
\fill[red] (6.2,0.2) circle[radius=2pt]; \draw (6.2,0.2) node[anchor=south west] {\tiny$E$};
\fill[red] (3,1.8) circle[radius=2pt]; \draw (3,1.8) node[anchor=south west] {\tiny$D=D_\alpha$};
\fill[red] (1.8,2.4) circle[radius=2pt]; \draw (1.8,2.4) node[anchor=south west] {\tiny$D_{\alpha-1}$};
\fill[red] (.6,1.8) circle[radius=2pt]; \draw (.6,1.8) node[anchor=south east] {\tiny$D_{2}$};
\fill[red] (0,0.6) circle[radius=2pt]; \draw (0,0.6) node[anchor=south east] {\tiny$D_{1}$};

\draw (0,0) node[anchor=east] {\tiny$O$};
\draw[->] (0,0) -- (10,0)
node[above] {\tiny $x$};
\draw[->] (0,0) -- (0,7)
node[left] {\tiny $y$};
\end{scope}
\end{tikzpicture}
\end{center}
\caption{Case (b1) on the left; Case (b2) on the right.}
\label{fig:b1 b2}
\end{figure}
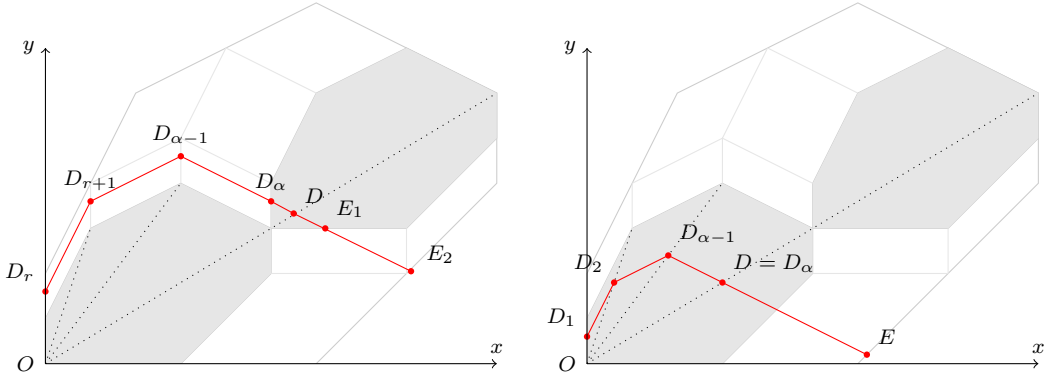

By Proposition~\ref{application_of_Magnus}, we see that 
$P_m^{-1}R_{ud_1+vd_2}\in K[x^{\pm1},y^{\pm1}]$. Denote by $D_\alpha$ the intersection of the line segment $\overline{DD_{\alpha-1}}$ with the boundary of $\mathcal{N}''$.  Let $E_1$  be the intersection point of the (unbounded) half line $\overrightarrow{D_{\alpha}D}$ and the  boundary of $\mathcal{N''}$. Let $E_2$ be the intersection of $\overrightarrow{D_{\alpha}D}$ with the  boundary of $N(F)$. See the picture on the left in Figure \ref{fig:b1 b2}. 

Since $\text{supp}(R)\cap\mathcal{N}''=\emptyset$, we have $\text{supp}(R)\cap \overline{D_{\alpha}E_1}=\emptyset$. Since the lattice points on $\overline{E_1E_2}$ must be in some $T_{\ell}$ for some $\ell<i$, we also have $\text{supp}(R)\cap\overline{E_1E_2}=\emptyset$. Hence $\text{supp}(R_{ud_1+vd_2}) \in \overline{D_\alpha D_{\alpha-1}}\setminus \{D_\alpha\}$, so $\text{len}(R_{ud_1+vd_2})<\text{len}(P_m)$. Then  $\text{supp}(R) \cap \overline{D_\alpha D_{\alpha-1}}=\emptyset$ due to Corollary~\ref{application2_of_Magnus}. 
Applying a similar argument, we see that $\text{supp}(R) \cap \overline{D_{\alpha-1}D_{\alpha-2}}=\cdots=\text{supp}(R) \cap \overline{D_{r+1}D_{r}}=\emptyset$.     Therefore $\text{supp}(R) \cap T_i=\emptyset$.

(b2)  Suppose $T_i=T_D$ or $T'_D$ for $D=(d_1,d_2)\in\overline{OC'}\setminus\{O,C'\}$ . Without loss of generality assume $T_i=T_D=D_\alpha D_{\alpha-1}\cdots D_1$, where $D_\alpha=D$. Let $E$ be the intersection of $\overrightarrow{D_{\alpha-1}D}$ with the  boundary of $N(F)$. See the picture on the right in Figure \ref{fig:b1 b2}. Since the lattice points on $\overline{DE}$ must be in some $T_{\ell}$ for some $\ell<i$, we also have $\text{supp}(R)\cap\overline{DE}=\emptyset$. Hence $\text{supp}(R_{ud_1+vd_2}) \in \overline{D_\alpha D_{\alpha-1}}$, so $\text{len}(R_{ud_1+vd_2})<\text{len}(P_m)$. From here, we get the same conclusion as in (b1).
\end{proof}

\begin{proof}[Proof of Theorem~\ref{main_thm}]
The same as the proof of Corollary~\ref{sum_PP_imply}.
\end{proof}

\end{document}